\newcommand{\tree}{\mathcal{T}}
\newcommand{\leaves}{\mathcal{L}}
\newcommand{\nodes}{\mathcal{N}}
\newcommand{\vertices}{\mathcal{V}}
\newcommand{\rootvertex}{\mathbb{r}}
\newcommand{\network}{\overline{\tree}}
\newcommand{\tttree}{T\tree}
\newcommand{\ttnetwork}{\overline{T\tree}}
\newcommand{\hierarchicaltree}{H\tree}
\newcommand{\hierarchicalnetwork}{\overline{H\tree}}
\newlength{\helpspace}
\newcommand{\descendants}[2][]{%
  \setlength{\helpspace}{\widthof{$#1$}}%
  \text{$\downarrow\negthickspace^{\negthickspace^{#1}}%
    \hspace*{.5mm-2\helpspace/3}#2$}}
\newcommand{\antidescendants}[2][]{%
  \setlength{\helpspace}{\widthof{$#1$}}%
  \text{$\uparrow\negthickspace_{\negthickspace_{#1}}%
    \hspace*{.5mm-\helpspace/3}#2$}}
\newcommand{\descendantof}[1][\tree]{\le_#1}
\newcommand{\ancestorof}[1][\tree]{\ge_#1}
\DeclareMathOperator{\lca}{\bigvee} 
\DeclareMathOperator{\tns}{TNS}
\DeclareMathOperator{\tne}{\mathcal{E}}
\DeclareMathOperator{\doad}{DOAD}
\newcommand{\maxima}{m} 
\newcommand{\contract}{\mathbin{\llcorner}}
\definecolor{cSofia}{rgb}{0.1,0.45,0.03}
\definecolor{cChristian}{rgb}{0.6,0.5,0.0}
\newcommand{\R}{\mathds{R}}
\newcommand{\Z}{\mathds{Z}}
\newcommand{\N}{\mathds{N}}
\theoremstyle{plain}
\newtheorem{theorem}{Theorem}[section]
\newtheorem{prop}[theorem]{Proposition}
\newtheorem{corollary}[theorem]{Corollary}
\newtheorem{lemma}[theorem]{Lemma}
\newtheorem{question}[theorem]{Question}
\theoremstyle{definition}
\newtheorem{definition}[theorem]{Definition}
\newtheorem{example}[theorem]{Example}
\newtheorem{remark}[theorem]{Remark}
\newcommand{\rleft}{\mathopen{}\mathclose\bgroup\left}
\newcommand{\rright}{\aftergroup\egroup\right}
\newcommand{\kk}{\Bbbk} 
\newcommand{\ambientspaces}{\mathbb{V}}
\newcommand{\one}{\mathbb{1}}
\newcommand{\st}{\,|\,}
\title{Containments of Tensor Network Varieties}
\author{Sofía Garzón Mora}
\address{Sofía Garzón Mora, Mathematik, Freie Universität Berlin, 14195 Berlin, Germany.}
\email{sofia.garzon.mora@fu-berlin.de}
\thanks{}
\author{Christian Haase}
\address{Christian Haase, Mathematik, Freie Universität Berlin, 14195 Berlin, Germany.}
\curraddr{}
\email{haase@math.fu-berlin.de}
\thanks{}
\subjclass[2021]{Primary: ; Secondary: }
\keywords{Perfect Binary Tree, Train Track Binary Tree, Tensor format, Hackbusch Conjecture, Tensor Network States Variety}
\begin{document}
\selectlanguage{english}

\begin{abstract}
Building upon the work of Buczyńska et al.~\cite{BBM15}, we study here tensor formats and their corresponding encoding of tensors via two-fold tensor products determined by the combinatorics of a binary tree. 
The set of all tensors representable by a given network forms the corresponding tensor network variety.
A very basic question asks whether every tensor representable by one network is representable by another network, namely, when one tensor network variety is contained in another. Specific instances of this question became known as the Hackbusch Conjecture. Here, we propose a general framework for this question and take first steps, theoretical as well as experimental, towards a better understanding.
In particular, given any two binary trees on $n$ leaves, we define (and prove existence of) a new measure, the \emph{containment exponent}, which gauges how much one has to boost the parameters of one network for the containment to hold. We present an algorithm for bounding these containment exponents of tensor network varieties and report on an exhaustive search among trees on up to $n=8$ leaves.
\end{abstract}

\maketitle{}

\section{Introduction}
\label{sec:intro}

Encoding data \textit{efficiently} using tensors is common practice in many sciences. The approach we will study here makes use of \textit{tensor formats}. A tensor format encodes an $n$-way tensor as a sequence of linear subspaces via two-fold tensor products in a way determined by a binary tree with $n$ leaves. The main idea being that the number of parameters needed to determine the tensor is much smaller than the dimension of the ambient space -- the tensor product of $n$ vector spaces.

Our choice of binary tree, of a family of vector spaces for the leaves and a dimension function for the subspaces on the vertices of the tree determines a tensor network. This, together with a nesting property of these subspaces of the nodes determines the variety of tensor network states, meaning the locus of all possible tensors which can be encoded in this manner. Hence, the natural question which arises is whether one tensor network variety is contained in another. We will make sense of this question and give precise definitions in the next section.

The containments of tensor network varieties have been studied for some particular plane trees, namely the so-called hierarchical tree and the train track tree in \cite{BBM15}, where a conjecture initially posed by Hackbusch~\cite{Hac12} is resolved. Here, our aim is to generalize this containment question to more generally shaped trees, disregarding a choice of planar embedding. For this, we study the poset structure of the trees and find a combinatorial bound for the containment of their respective tensor network varieties.

In particular, given any two binary trees on $n$ leaves, we define (and prove existence of) a new measure, the \emph{containment exponent}, which gauges how much one has to boost the parameters of one network for the containment to hold. Namely, when we scale the dimension function of the first network, how much do we have to scale the dimension function of the second for the containment to hold?
We have implemented an algorithm, which we use to bound the containment exponents among all tensor network varieties that arise from full binary rooted trees with $n \le 8$ leaves. Since the bound depends on the labeling of the leaves, in order to cover all possible containment exponents, we take into account possible permutations of the labels of the $n$ leaves. The results for $n=4,5,6,7,8$ leaves are collected in \cite{sofiaGitHub}.

Additionally, we expand the previous results from the literature by introducing a combinatorial bound for the containment of varieties arising from arbitrarily shaped plane binary trees into one another. We present examples showing that the obtained bounds are not necessarily sharp and end this paper with some open questions.

\section*{Acknowledgments}
SGM was supported by the Deutsche Forschungsgemeinschaft
(DFG) under the Graduiertenkolleg ``Facets of Complexity'' (GRK 2434)
and under Germany's Excellence Strategy – The Berlin Mathematics
Research Center MATH+ (EXC-2046/1, project ID: 390685689).
CH was supported, in part, by the Polish-German grant ``ATAG --
Algebraic Torus Actions: Geometry and Combinatorics'' [project number
380241778] of the Deutsche Forschungsgemeinschaft (DFG), and 
by the SPP 2458 ``Combinatorial Synergies'', funded by the DFG.

\section{Tensor Formats for Binary Trees}
\label{sec:tensorform}
We begin by introducing basic notions regarding binary trees and the
associated varieties of network states. Then, we review results from
the literature, mainly~\cite{BBM15}, which will be crucial in our
argument.

\subsection{Trees} \label{sec:trees}
All trees in this article will be full binary, rooted trees.

\begin{definition}
A \textit{full binary tree} $\tree$ is a rooted tree with root
$\rootvertex(\tree)$ such that each vertex has either
exactly two children, in which case it is referred to as a
\textit{node}, or it has no descendants at all, so that it is called a
\textit{leaf}.

A plane tree is a tree together with a choice of which child of any
given node is left and which one is right.
\end{definition}

We denote the set of nodes (including $\rootvertex(\tree)$) by
$\nodes(\tree)$, the set of leaves by $\leaves(\tree)$.
We omit the tree from the notation if the tree which we are talking about is clear from the context.
We will also say that $\tree$ is a tree on $\leaves$.
The set of vertices is the union $\vertices=\nodes \cup \leaves$.
We have $|\nodes|=|\leaves|-1$. 

The descendant relation gives a poset structure to $\vertices$. We
write $v \descendantof w$ if $v$ is a descendant of $w$.
Following \cite{BBM15}, we will denote by $\descendants[\tree]{v} \subseteq
\leaves$ the subset of descendant leaves of $v$, and by
$\antidescendants[\tree]{v}$ the complement $\leaves \setminus
\descendants[\tree]{v}$. If it is unambiguous which tree we are
working with, we will omit it from the notation and ony write
$\antidescendants{v}$ respectively $\descendants{v}$.

Two particular classes of trees which have gotten special attention in
applications and in the literature are the hierarchical and the
train track trees:

The \textit{hierarchical tree} $\hierarchicaltree_k$ is a perfect binary
tree, i.e. a full binary tree in which every leaf has exactly the same
number $k$ of ancestors. We say the tree $\hierarchicaltree_k$ has depth
$k$, so that $|\vertices|=2^{k+1}-1$ and $|\leaves|=2^k$. 
Any two plane trees with this underlying tree are isomorphic.

\begin{figure}[H]
\centering
\begin{tikzpicture}[level distance=1.5cm,
  level 1/.style={sibling distance=3cm},
  level 2/.style={sibling distance=1.5cm}]
  \node {root}
    child {node {$v_1$}
      child {node {Leaf 1}}
      child {node {Leaf 2}}
    }
    child {node {$v_2$}
    child {node {Leaf 3}}
      child {node {Leaf 4}}
    };
\end{tikzpicture}
\caption{Hierarchical tree $\hierarchicaltree_2$ of depth 2, with 4 leaves.}
\label{fig:HT}
\end{figure}
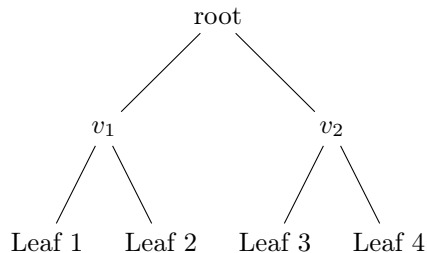

The \textit{train track tree} $\tttree_n$ with $n$ leaves is
characterized by the fact that each node has at least one leaf as a
child.
\begin{figure}[H]
\centering
\begin{tikzpicture}[level distance=1.5cm,
  level 1/.style={sibling distance=3cm},
  level 2/.style={sibling distance=3cm}
  level 3/.style={sibling distance=3cm}]
  \node {root}
    child {node {$v_1$}
      child {node {$v$}
        child {node {Leaf 1}}
       child {node {Leaf 2}}
      }
      child {node {Leaf 3}}
    }
    child {node {Leaf 4}
    };
\end{tikzpicture}
\caption{Train track tree $\tttree_4$ with 4 leaves.}
\label{fig:TT}
\end{figure}
If we refer to $\tttree_n$ as a plane tree, the left child of each node
has at least as many descendants as the right child. (There is only
one draw, and the two choices yield isomorphic plane trees.)

\subsection{Varieties of Tensor Network States} \label{sec:varieties}

We start with a finite set $\leaves$ and a family of
vector spaces\footnote{The ground field is irrelevant for
  our considerations. In applications, it will be $\R$.
}
$\ambientspaces = (V_\ell)_{\ell \in \leaves}$.
Our ambient space will then be $\bigotimes_{\ell \in \leaves}
V_\ell$.

Given a tree $\tree$ on the leaf set $\leaves$, we can successively
build a tensor in this ambient space according to $\tree$ as follows.

\begin{definition}
  A \textit{tensor network} $\network = (\tree,\ambientspaces,f)$ is a
  tree $\tree$ together with a family $\ambientspaces$ of vector spaces
  as above, and together with a
  dimension vector $f \in \N^\vertices$. We will write
  $f=\one$ for the vector with coordinates $f_v=1$ for all $v \in \vertices$.
  Later on, we will also consider scaled networks $k\network =
  (\tree,\ambientspaces,kf)$
  for $k \in \N$.
  
  The \textit{variety of tensor network states} $\tns(\network)$
  associated to a tensor network is the set of elements $t \in
  V_\rootvertex \coloneq \bigotimes_{\ell \in \leaves} V_\ell$ such that for each vertex
  $v \in \vertices$, there exists a linear subspace $U_v \subseteq
  V_v \coloneq \bigotimes_{\ell \in \descendants{v}} V_\ell$ with $\dim(U_v) \leq
  f_v$, with $t \in U_\rootvertex$, and so that
  $U_v \subseteq U_{v_1} \otimes U_{v_2}$ if $v$ is not a leaf and 
  $v_1, v_2$ are the children of $v$. 
\end{definition}

The first example of a tensor network variety is the locus of bounded
rank matrices. It arises from the tree $\tree$ on two leaves $\ell, \ell'$
with dimension vector $f = \one$ (i.e., $f_\rootvertex=f_\ell=f_{\ell'}=1$).
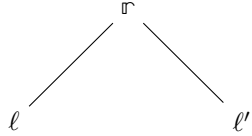
\begin{figure}[H]
\centering
\begin{tikzpicture}[level distance=1.5cm,
  level 1/.style={sibling distance=3cm}]
  \node {$\rootvertex$}
  child {node {$\ell$}}
  child {node {$\ell'$}};
\end{tikzpicture}
\caption{Network for matrices of rank $\le r$.}
\label{fig:matrixrank}
\end{figure}
With this network $\network$, for $r \in \N$ the variety
$\tns(r\network) \subseteq V_\ell \otimes V_{\ell'}$
is the set of
two way tensors of rank $\le r$. In fact, for any dimension vector
$f$, we will obtain this variety: $\tns(\tree,f) = \tns(\tree,r\one)$
for $r=\min\{f_\ell,f_{\ell'}\}$.

\subsection{Containment of Tensor Network State Varieties}
\label{sec:inclusion-setup}
If we want to ask whether one tensor network state variety is
contained in another
\begin{equation*}
  \label{eq:inclusion-question}
  \tns(\network) \ \overset{?}{\subseteq} \tns(\network') \,,
\end{equation*}
the two should live in the same ambient space to begin with.
To this end, we need a bijection $\pi \colon \leaves \to \leaves'$
between the respective leaf sets
together with identifications $V_\ell = V'_{\pi(\ell)}$ for all $\ell
\in \leaves$.
If we are working with plane trees, there is a natural identification
$\pi \colon \leaves \to \leaves'$ going through both sets of leaves
from left to right.

Clearly, $\tns(\tree,\ambientspaces,f) \subseteq
\tns(\tree,\ambientspaces,f')$ provided $f \le f'$  
component wise. The matrix example above shows that the varieties may
be identical even though $f \neq f'$.

Our study was inspired by the results
of~\cite{BBM15} 
which in turn were inspired by Hackbusch's
conjecture (below) about the containment questions for
$\hierarchicalnetwork_k = (\hierarchicaltree_k,\ambientspaces,\one)$
and $\ttnetwork_n =
(\tttree_n,\ambientspaces,\one)$.

\begin{prop}
  \label{prop:HackbuschInverse1}
  Let $k \geq 3$. Consider $\tttree_{2^k}$ and $\hierarchicaltree_k$
  as plane trees with the corresponding leaf identifications.
  Then the inclusion 
  $$\tns(r\,\ttnetwork_{2^k}) \subseteq \tns(r^2\,\hierarchicalnetwork_k)$$
  holds for all $r \in \N$.
\end{prop}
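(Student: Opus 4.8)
The plan is to take a tensor $t \in \tns(r\,\ttnetwork_{2^k})$, unpack the subspaces on the train track that witness this, and from them build a system of subspaces on the hierarchical tree witnessing $t \in \tns(r^2\,\hierarchicalnetwork_k)$. Write $n=2^k$ and identify both leaf sets with $\{1,\dots,n\}$ from left to right, and put $V_{[a,b]} := \bigotimes_{\ell=a}^{b} V_\ell$. On $\tttree_n$ the internal vertices are exactly those with descendant-leaf sets $\{1,\dots,m\}$ for $2\le m\le n$, the root being $m=n$; so $t\in\tns(r\,\ttnetwork_n)$ gives subspaces $U_m\subseteq V_{[1,m]}$ and $L_\ell\subseteq V_\ell$, all of dimension $\le r$, with $U_2\subseteq L_1\otimes L_2$, with $U_m\subseteq U_{m-1}\otimes L_m$ for $3\le m\le n$, and with $t\in U_n$. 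Setting $U_1:=L_1$, the relation $U_m\subseteq U_{m-1}\otimes L_m$ holds for all $m\ge 2$; iterating it yields $U_b\subseteq U_{a-1}\otimes V_{[a,b]}$ for $2\le a\le b\le n$, and $U_b\subseteq V_{[1,b]}$ for $a=1$.

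Next I would attach to each vertex $u$ of $\hierarchicaltree_k$, whose descendant leaves form an interval $\{a,\dots,b\}$, the subspace
\[
  W_u\ :=\ \Span\bigl\{\,(\phi\otimes\mathrm{id})(x)\ :\ x\in U_b,\ \phi\in (V_{[1,a-1]})^{*}\,\bigr\}\ \subseteq\ V_{[a,b]}\,,
\]
i.e.\ the image of $U_b$ after contracting away its first $a-1$ tensor factors (so $W_u=U_b$ when $a=1$). Three things then need checking. For the dimension bound: since $U_b\subseteq U_{a-1}\otimes V_{[a,b]}$, expanding along a basis $e_1,\dots,e_s$ of $U_{a-1}$ writes each $x\in U_b$ as $\sum_i e_i\otimes\beta_i(x)$ with linear maps $\beta_i\colon U_b\to V_{[a,b]}$, so $W_u\subseteq\sum_i\beta_i(U_b)$ and $\dim W_u\le\dim U_{a-1}\cdot\dim U_b\le r^2$ (and $\dim W_u\le r$ when $a=1$); in particular each leaf subspace $W_u$ lies in the corresponding $L_\ell\subseteq V_\ell$, and the root subspace is $W_{\rootvertex}=U_n\ni t$. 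It remains to verify the nesting $W_u\subseteq W_{u_1}\otimes W_{u_2}$ for $u$ with interval $\{a,\dots,b\}$ and children $u_1,u_2$ with intervals $\{a,\dots,c\}$ and $\{c+1,\dots,b\}$: the same basis-expansion shows both $U_c\subseteq U_{a-1}\otimes W_{u_1}$ (the $V_{[a,c]}$-coordinates of elements of $U_c$ lie in $W_{u_1}$ by definition) and $U_b\subseteq U_c\otimes W_{u_2}$ (using $U_b\subseteq U_c\otimes V_{[c+1,b]}$ together with the fact that the $V_{[c+1,b]}$-coordinates of elements of $U_b$ lie in $W_{u_2}$); substituting the first into the second gives $U_b\subseteq U_{a-1}\otimes W_{u_1}\otimes W_{u_2}$, and contracting away the first $a-1$ factors lands $W_u$ inside $W_{u_1}\otimes W_{u_2}$, as required.

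I expect the technical heart to be the repeated linear-algebra step, which I would isolate as a one-line lemma: a subspace $S\subseteq A'\otimes B$ satisfies $S\subseteq A'\otimes S'$, where $S':=\Span\{(\phi\otimes\mathrm{id})(x):\phi\in (A')^{*},\,x\in S\}$, and moreover $\dim S'\le\dim A'\cdot\dim S$. Once this is in place both the $r^2$-bound and the nesting are essentially formal. The one point requiring care is the asymmetry between the two children: $W_{u_1}$ is defined by contracting the prefix $\{1,\dots,a-1\}$ whereas $W_{u_2}$ is defined by contracting the longer prefix $\{1,\dots,c\}$, so the chain fed into the lemma for the right child must be $U_b\subseteq U_c\otimes V_{[c+1,b]}$ rather than $U_b\subseteq U_{a-1}\otimes V_{[a,b]}$. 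The degenerate intervals with $a=1$ (the leftmost leaf and all its ancestors) are handled separately, which is immediate since there $W_u=U_b$ and no contraction occurs. Note that, phrased this way, the argument works for every $k\ge 1$; the hypothesis $k\ge 3$ is not needed for this direction.
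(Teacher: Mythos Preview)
Your argument is correct: the explicit subspaces $W_u$ you build do satisfy the required dimension bound and nesting, so this is a valid proof. It is, however, a genuinely different route from the one the paper takes.

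The paper never constructs subspaces on $\hierarchicaltree_k$. Instead it relies on the \emph{outer} description of $\tns$ via flattening ranks (Proposition~2.6) together with the doad-covering machinery (Lemma~2.10, and more generally Theorem~4.4). The observation is purely combinatorial: for any vertex $v'$ of $\hierarchicaltree_k$ the descendant leaves form a contiguous interval $[a,b]$, so the anti-descendant set is $[1,a-1]\cup[b+1,n]$, a union of two doad sets of $\tttree_n$. Lemma~2.10 (the non-asymptotic variant of \cite[Lemma~3.1]{BBM15}) then gives the inclusion with exponent~$2$ for all $r$ in one line. In fact nothing about $\hierarchicaltree_k$ beyond ``descendant sets are intervals'' is used, which is why the paper immediately upgrades the statement to Theorem~4.4: $2$ is a tensor network exponent for $\tttree_n$ in \emph{any} plane tree.

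Your approach, by contrast, works with the \emph{inner} (subspace) description and is fully constructive: you exhibit the witnessing $(W_u)_u$ explicitly as contractions of the train-track spaces $U_b$. This is more hands-on and self-contained---it does not invoke the flattening-rank equivalence at all---but it is tied more closely to the specific shape of the two trees and does not make the generalization to arbitrary plane targets as transparent. Your closing remark is also correct: the inclusion holds for every $k\ge 1$; the hypothesis $k\ge 3$ in the paper's statement is only there because that is the range in which the companion tightness result (Proposition~2.4) applies.
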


\begin{prop}
  \label{prop:HackbuschInverse2}
  Let $k \in \N$.
  Consider $\tttree_{2^k}$ and $\hierarchicaltree_k$
  as plane trees with the corresponding leaf identifications.
  Then for all $r \in \N$, the inclusion 
  $$ \tns(r\, \hierarchicalnetwork_k) \subseteq \tns(r^{\lceil
    \frac{k}{2} \rceil}\, \ttnetwork_{2^k})$$ 
  holds for all $r \in \N$.
\end{prop}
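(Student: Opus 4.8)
The plan is to re-use the leaf subspaces coming from the hierarchical representation of $t$ and to build the subspaces required along the caterpillar $\tttree_{2^k}$ greedily, absorbing one leaf at a time and always taking the \emph{smallest} subspace compatible with $t$; the nesting conditions then hold automatically, and the only real work is bounding the dimensions of these subspaces. The bound comes from the behaviour of flattenings. For a finite-dimensional tensor $t\in\bigotimes_{\ell\in\leaves}V_\ell$ and a subset $S\subseteq\leaves$, write $M_S(t)\subseteq\bigotimes_{\ell\in S}V_\ell$ for the smallest subspace with $t\in M_S(t)\otimes\bigotimes_{\ell\notin S}V_\ell$, and set $\mathrm{rk}_S(t)=\dim M_S(t)$. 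I will use three elementary facts: (i) $\mathrm{rk}_S(t)=\mathrm{rk}_{\leaves\setminus S}(t)$; (ii) for a disjoint decomposition $S=S_1\sqcup S_2$ one has $\mathrm{rk}_S(t)\le\mathrm{rk}_{S_1}(t)\cdot\mathrm{rk}_{S_2}(t)$, which follows from the ``grid'' identity $(A_1'\otimes A_2\otimes C)\cap(A_1\otimes A_2'\otimes C)=A_1'\otimes A_2'\otimes C$ for subspaces of a three-fold tensor product; and (iii) if $t\in\tns(\tree,\ambientspaces,f)$ with witnessing subspaces $(U_v)_{v\in\vertices}$, then the nesting conditions, iterated from the root $\rootvertex$ down to a vertex $v$, show $t\in U_v\otimes U_{w_1}\otimes\cdots\otimes U_{w_m}$ (up to permuting the tensor factors), where $w_1,\dots,w_m$ are the siblings met on the path from $\rootvertex$ to $v$ and $\descendants{v},\descendants{w_1},\dots,\descendants{w_m}$ partition $\leaves$; in particular, writing $S=\descendants{v}$, we get $M_S(t)\subseteq U_v$ and $\mathrm{rk}_S(t)\le f_v$, and iterating all the way down to the leaves gives $t\in\bigotimes_{\ell\in\leaves}U_\ell$.

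Concretely, given $t\in\tns(r\,\hierarchicalnetwork_k)$ with witnessing subspaces $(U_v)$ of dimension at most $r$, I would label the leaves $1,\dots,2^k$ from left to right. The internal vertices of $\tttree_{2^k}$ are then $v_2,\dots,v_{2^k}=\rootvertex$ with $\descendants{v_j}=\{1,\dots,j\}$, the children of $v_j$ being $v_{j-1}$ and the leaf $j$ for $j\ge3$, and the children of $v_2$ the leaves $1$ and $2$. I keep the leaf subspaces $U_1,\dots,U_{2^k}$, and for $2\le j\le 2^k$ I set $W_j:=M_{\{1,\dots,j\}}(t)$. By (iii) each $W_j$ lies inside $U_1\otimes\cdots\otimes U_j$, so $W_2\subseteq U_1\otimes U_2$. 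For $j\ge3$, with $B_j:=\bigotimes_{i>j}V_i$, the tensor $t$ lies in $\bigl(W_{j-1}\otimes V_j\otimes B_j\bigr)\cap\bigl((V_1\otimes\cdots\otimes V_{j-1})\otimes U_j\otimes B_j\bigr)=W_{j-1}\otimes U_j\otimes B_j$ by the grid identity, and since $W_j$ is the smallest subspace $W$ of $V_1\otimes\cdots\otimes V_j$ with $t\in W\otimes B_j$, this forces $W_j\subseteq W_{j-1}\otimes U_j$. Finally $W_{2^k}=\Span(t)\ni t$. Hence $(U_\ell)_\ell$ together with $(W_j)_j$ satisfy all conditions in the definition of $\tns$ for the caterpillar, so $t\in\tns(\tttree_{2^k},\ambientspaces,g)$ where $g$ assigns $\dim U_\ell\le r$ to leaf $\ell$ and $\dim W_j$ to $v_j$.

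It remains to bound $\dim W_j=\mathrm{rk}_{\{1,\dots,j\}}(t)$ by $r^{\lceil k/2\rceil}$. Reading off the binary digits of $j$ expresses the prefix $\{1,\dots,j\}$ as a disjoint union of $s_2(j)$ sets of the form $\descendants{v}$ with $v\in\hierarchicaltree_k$, where $s_2$ denotes the binary digit sum; symmetrically the suffix $\{j+1,\dots,2^k\}$ is a disjoint union of $s_2(2^k-j)$ such sets. Since $\mathrm{rk}_S(t)\le r$ for every $S$ of the form $\descendants{v}$, facts (i) and (ii) give $\mathrm{rk}_{\{1,\dots,j\}}(t)\le r^{\min(s_2(j),\,s_2(2^k-j))}$. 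A carry count in the addition $j+(2^k-j)=2^k$ yields $s_2(j)+s_2(2^k-j)=k+1-(\text{number of trailing zeros of }j)\le k+1$, so $\min(s_2(j),s_2(2^k-j))\le\lfloor(k+1)/2\rfloor=\lceil k/2\rceil$ and hence $\dim W_j\le r^{\lceil k/2\rceil}$. As also $\dim U_\ell\le r\le r^{\lceil k/2\rceil}$, this exhibits $t\in\tns(r^{\lceil k/2\rceil}\,\ttnetwork_{2^k})$ and finishes the proof.

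The construction is essentially forced once one commits to taking minimal subspaces, so the real content — and what I expect to be the crux — is twofold: first, noticing that the gain over the naive exponent $k$ comes precisely from the symmetry $\mathrm{rk}_S(t)=\mathrm{rk}_{\leaves\setminus S}(t)$, which lets one cover whichever of $\{1,\dots,j\}$ or its complement is cheaper in binary; and second, the small arithmetic lemma $s_2(j)+s_2(2^k-j)\le k+1$. I also anticipate a couple of fiddly but routine verifications: that the pieces in a prefix or suffix decomposition really are descendant sets $\descendants{v}$ of vertices of $\hierarchicaltree_k$, and that the grid-intersection identity holds in its multi-factor form for subspaces (it does, by choosing compatible direct-sum decompositions of each tensor factor).
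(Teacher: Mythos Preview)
Your proof is correct and follows essentially the same route as the paper: both arguments bound the flattening rank $\mathrm{rk}_{\{1,\dots,j\}}(t)$ by covering either $\{1,\dots,j\}$ or its complement with descendant sets of $\hierarchicaltree_k$ and invoking submultiplicativity together with the symmetry $\mathrm{rk}_S=\mathrm{rk}_{\leaves\setminus S}$; your digit-sum identity $s_2(j)+s_2(2^k-j)=k+1-(\text{trailing zeros of }j)$ is precisely the paper's height bound $h_\ell+h^*_{\ell+1}\le k-1$ in different clothing. The only cosmetic difference is that you verify the nesting $W_j\subseteq W_{j-1}\otimes U_j$ by hand, whereas the paper sidesteps this via the outer description of $\tns$ (its Proposition~2.6) and the general DOAD-covering Lemma~3.1 of \cite{BBM15}.
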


Moreover, these bounds are tight, which was shown in \cite{BBM15} and
answered a conjecture first posed by Hackbusch~\cite{Hac12}:

\begin{theorem}[\textit{Hackbusch Conjecture}, Theorem 4.3,
  \cite{BBM15}]
  For any $k \geq 1$, $r \geq 2$, the variety
  $\tns(r\hierarchicalnetwork_k)$ is not contained in any of the
  varieties $\tns((r^{\lceil \frac{k}{2} \rceil}-1)\, \ttnetwork_{2^k})$
  for any matching of the leaves.
\end{theorem}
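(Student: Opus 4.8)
The plan is to obstruct the containment using \emph{flattening ranks}, as these are insensitive to the choice of matching. For $t\in V_\rootvertex=\bigotimes_{\ell\in\leaves}V_\ell$ and $A\subseteq\leaves$ write $R_A(t)$ for the rank of $t$ regarded as an element of $\bigl(\bigotimes_{\ell\in A}V_\ell\bigr)\otimes\bigl(\bigotimes_{\ell\notin A}V_\ell\bigr)$; note $R_A(t)=R_{\leaves\setminus A}(t)$ and that $R_A$ is unchanged when the leaves are relabelled. Write $L_v\subseteq\leaves$ for the set of leaves below a vertex $v$. Iterating the nesting condition $U_v\subseteq U_{v_1}\otimes U_{v_2}$ down to any \emph{frontier} $F$ — an antichain of vertices below which every leaf lies exactly once — gives $U_\rootvertex\subseteq\bigotimes_{v\in F}U_v$ for any $t\in\tns(\tree,\ambientspaces,f)$; if in addition $L_v\subseteq A$ or $L_v\subseteq\leaves\setminus A$ for every $v\in F$, then grouping the factors yields $R_A(t)\le\prod_{v\in F,\ L_v\subseteq A}f_v$. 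For $s\,\ttnetwork_{2^k}$ this is very restrictive: $\tttree_{2^k}$ is a caterpillar (every node has a leaf child), so cutting the edge above any spine vertex $w$ produces a frontier whose $A$-side, for $A=L_w$, is the single vertex $w$ of weight $s$; hence $R_{L_w}(t)\le s$ for every $t\in\tns(s\,\ttnetwork_{2^k})$. The sets $L_w$ are nested of all sizes from $2$ to $2^k$, so together with $\varnothing$ and one trailing leaf (also a valid cut) they form a complete flag $\varnothing=A_0\subsetneq A_1\subsetneq\dots\subsetneq A_{2^k}=\leaves$. Therefore, \emph{for any matching of the leaves}, $t\in\tns(s\,\ttnetwork_{2^k})$ forces the existence of a complete flag of leaf subsets with $R_{A_j}(t)\le s$ for all $j$. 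It thus suffices to exhibit one $t\in\tns(r\,\hierarchicalnetwork_k)$ for which \emph{every} complete flag of $\leaves(\hierarchicaltree_k)$ contains a member $A$ with $R_A(t)\ge r^{\lceil k/2\rceil}$.

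Applying the same frontier estimate to $\hierarchicalnetwork_k$ gives $R_A(t)\le r^{c(A)}$ for every $t\in\tns(r\,\hierarchicalnetwork_k)$, where $c(A)=\min\bigl(\tau(A),\tau(\leaves\setminus A)\bigr)$ and $\tau(B)$ is the least number of subtrees of $\hierarchicaltree_k$ whose leaf sets partition $B$. So producing $t$ reduces to a combinatorial statement and a genericity statement. The combinatorial claim is: \emph{for every complete flag $\varnothing=A_0\subsetneq\dots\subsetneq A_{2^k}=\leaves(\hierarchicaltree_k)$ there is an index $j$ with $\tau(A_j)\ge\lceil k/2\rceil$ and $\tau(\leaves\setminus A_j)\ge\lceil k/2\rceil$} — equivalently, along any chain of leaf subsets growing one leaf at a time one is eventually forced to need $\ge\lceil k/2\rceil$ subtrees to cover what is inside and $\ge\lceil k/2\rceil$ to cover what is outside. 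For $k\le4$ this is immediate, since any $A_j$ of size $3$ works; the content is in $k\ge5$.

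I expect this combinatorial claim to be the main obstacle. I would prove it by induction on $k$: a complete flag of $\leaves(\hierarchicaltree_k)$ restricts to complete flags of the leaf sets of the two principal subtrees $\hierarchicaltree_{k-1}$, and $\tau$ is additive across this splitting except for a correction of $1$ each time a principal subtree becomes entirely full or entirely empty. The naive induction only yields the exponent $\lceil(k-1)/2\rceil$: the two restricted flags need not reach their good initial segments at the same time, and in the extremal orderings one principal subtree is exhausted before the other is started, so one never sees two half-sized contributions at once. Recovering the missing unit requires strengthening the inductive hypothesis to control the whole profile $j\mapsto\bigl(\tau(A_j),\tau(\leaves\setminus A_j)\bigr)$ — e.g. that it remains large over an interval of indices, or an interleaving statement robust enough to survive being combined with a second copy and then having one copy run ahead. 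The constant must come out exactly $\lceil k/2\rceil$; this matches Proposition~\ref{prop:HackbuschInverse2}, whose proof realizes the natural depth-first matching as one for which every such flattening rank is at most $r^{\lceil k/2\rceil}$.

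To finish, one needs a single $t\in\tns(r\,\hierarchicalnetwork_k)$ attaining $R_A(t)=r^{c(A)}$ for each of the finitely many subsets $A$ with $c(A)\ge\lceil k/2\rceil$ (assuming, as we may, $\dim V_\ell\ge r$ for all $\ell$). For a fixed such $A$ this is realized by taking the subspaces $U_v$ at the roots of optimal subtree partitions of $A$ and of $\leaves\setminus A$ in general position and propagating them up the tree; since "$R_A(t)\ge r^{\lceil k/2\rceil}$" is then a non-empty Zariski-open condition (non-vanishing of a suitable minor) on the parameter space of $\tns(r\,\hierarchicalnetwork_k)$, a generic $t$ satisfies all of them simultaneously. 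Such a $t$ meets every complete flag in a subset of flattening rank $\ge r^{\lceil k/2\rceil}>r^{\lceil k/2\rceil}-1$, so by the reduction $t$ is not contained in $\tns\bigl((r^{\lceil k/2\rceil}-1)\,\ttnetwork_{2^k}\bigr)$ for any matching, which is the theorem.
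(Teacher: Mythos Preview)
The paper does not supply its own proof of this statement; it is quoted from \cite{BBM15} as a known result, so there is no in-paper argument to compare your proposal against.

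On its own merits, your proposal is a correct reduction but not a proof, and you say as much. The reduction is sound: membership of $t$ in $\tns(s\,\ttnetwork_{2^k})$ for some matching is exactly the existence of a complete flag of leaf sets along which every flattening rank of $t$ is at most $s$, and for $t\in\tns(r\,\hierarchicalnetwork_k)$ each flattening rank is bounded by $r^{c(A)}$. You also implicitly observe the converse: if some complete flag had $c(A_j)\le\lceil k/2\rceil-1$ throughout, then every $t\in\tns(r\,\hierarchicalnetwork_k)$ would satisfy $R_{A_j}(t)\le r^{\lceil k/2\rceil-1}\le r^{\lceil k/2\rceil}-1$ along it, and the theorem would fail for the corresponding matching. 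So your combinatorial claim is not merely sufficient but equivalent to the theorem (given the genericity step), and you have not proved it. You explicitly note that the naive induction on $k$ loses a unit and only gesture at a strengthened inductive hypothesis without formulating one; that missing argument is the entire content of the result.

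The genericity step also needs more than you give it. To show that $\{R_A(t)\ge r^{c(A)}\}$ is nonempty inside $\tns(r\,\hierarchicalnetwork_k)$ you must exhibit at least one witness, and ``choose the frontier subspaces in general position and propagate up'' glosses over the fact that those subspaces are constrained by nesting through common ancestors, so their independence is not free. In \cite{BBM15} the obstruction is produced by an explicit tensor construction; carrying out either that construction or your genericity argument in full is genuine work that the proposal defers.
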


\begin{prop}[Proposition 4.6, \cite{BBM15}]
For any $k \geq 3$, $r \geq 2$, the variety $\tns(r\, \ttnetwork_{2^k})$
is not contained in any of the varieties
$\tns((r^2-1)\, \hierarchicalnetwork_k)$ for any matching of the leaves.
\end{prop}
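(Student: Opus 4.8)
The plan is to prove the non-containment for each matching separately. Fix a bijection $\pi\colon\leaves(\tttree_{2^k})\to\leaves(\hierarchicaltree_k)$ together with the identifications $V_\ell=V'_{\pi(\ell)}$; the goal is to produce a single tensor $t\in\tns(r\,\ttnetwork_{2^k})$ that does not lie in $\tns((r^2-1)\,\hierarchicalnetwork_k)$ through $\pi$. The invariant detecting this is a flattening rank: for $S\subseteq\leaves$ write $\operatorname{fl}_S(t)$ for $t$ regarded as a matrix with the factors indexed by $S$ grouped on one side. First I would record the (easy) direction of the standard tree estimate, obtained by iterating $U_w\subseteq U_{w_1}\otimes U_{w_2}$ from the root down: if $t\in\tns(\tree,\ambientspaces,g)$ then $\operatorname{rank}\operatorname{fl}_{\descendants{v}}(t)\le g_v$ for every vertex $v$. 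Applying this to $\hierarchicaltree_k$ with $g=(r^2-1)\one$ at a node $v$ of depth $k-1$ -- whose two children are leaves $\ell,\ell'$ -- shows: if $t\in\tns((r^2-1)\hierarchicalnetwork_k)$ via $\pi$, then $\operatorname{rank}\operatorname{fl}_{\{\pi^{-1}\ell,\,\pi^{-1}\ell'\}}(t)\le r^2-1$. Hence it suffices to exhibit a two-element leaf-set $\{a,b\}$ of $\tttree_{2^k}$ that is $\pi^{-1}$ of such a sibling pair, together with a tensor $t\in\tns(r\,\ttnetwork_{2^k})$ with $\operatorname{rank}\operatorname{fl}_{\{a,b\}}(t)=r^2$.

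Choosing the pair is the combinatorial step. Label the leaves of $\tttree_{2^k}$ by $1,\dots,n:=2^k$ in caterpillar order, so that the descendant leaf-sets $\descendants{v}$ of its vertices are exactly the singletons and the initial segments $\{1,\dots,j\}$; consequently the only two-element leaf-sets that can be cut off from their complement by deleting a single edge of $\tttree_{2^k}$ are $\{1,2\}$ and $\{n-1,n\}$ (using $n=2^k\ge 8$). The $2^{k-1}$ sibling pairs of $\hierarchicaltree_k$ pull back under $\pi^{-1}$ to a partition of $\{1,\dots,n\}$ into $2^{k-1}$ pairs; since $2^{k-1}\ge 4>2$ for $k\ge 3$, one of them, say $\{a,b\}$ with $a<b$, is neither $\{1,2\}$ nor $\{n-1,n\}$. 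Equivalently, the minimal edge-cut of $\tttree_{2^k}$ separating $\{a,b\}$ from the rest has size $2$ rather than $1$, so that the train-track network no longer forces $\operatorname{rank}\operatorname{fl}_{\{a,b\}}\le r$; this is also the point where the hypothesis $k\ge3$ (rather than $k\ge2$) genuinely enters, since for $k=2$ the two unavoidable pairs $\{1,2\},\{n-1,n\}$ already form a full pairing of the four leaves, matching the failure of the statement there.

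To build $t$, pick $c$ with $2\le c\le n-2$ and $a\le c<b$ (possible precisely because $\{a,b\}$ is neither $\{1,2\}$ nor $\{n-1,n\}$); the spine edge separating $\{1,\dots,c\}$ from $\{c+1,\dots,n\}$ splits $\leaves$ into $P\ni a$ and $Q\ni b$ with $|P|,|Q|\ge 2$. Now take tensors $A\in\bigotimes_{\ell\in P}V_\ell$ and $B\in\bigotimes_{\ell\in Q}V_\ell$ of tensor rank $r$ whose flattenings at $\{a\}$, respectively $\{b\}$, have rank $r$ -- these are easy to write down explicitly (using $\dim V_\ell\ge r$, as is tacitly assumed throughout) -- and set $t:=A\otimes B$. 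Since $A$ and $B$ have tensor rank $r$, every flattening of $t$ at a singleton or an initial segment has rank $\le r$ (for initial segments inside $P$ it is a flattening of $A$; for one meeting $Q$ the whole of $A$ lies in the row block and the rank is that of a flattening of $B$), so $t\in\tns(r\,\ttnetwork_{2^k})$, realized by the minimal subspaces $U_w=M_{\descendants{w}}(t)$ of the partial flattenings, with nesting given by the inclusion $M_{X\sqcup Y}(t)\subseteq M_X(t)\otimes M_Y(t)$. On the other hand, because $a\in P$, $b\in Q$ and $t=A\otimes B$, the matrix $\operatorname{fl}_{\{a,b\}}(t)$ is the Kronecker product of $\operatorname{fl}_{\{a\}}(A)$ and $\operatorname{fl}_{\{b\}}(B)$, hence has rank $r\cdot r=r^2$. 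Combined with the first paragraph this gives $t\in\tns(r\,\ttnetwork_{2^k})\setminus\tns((r^2-1)\hierarchicalnetwork_k)$ for the matching $\pi$, and since $\pi$ was arbitrary the proposition follows.

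I expect the fiddliest point to be the verification that the single explicit tensor $t=A\otimes B$ meets \emph{all} the nesting/rank constraints of the train-track network at once while remaining maximally entangled across $\{a,b\}$; by contrast the one-sided flattening bound and the pigeonhole over sibling pairs are routine.
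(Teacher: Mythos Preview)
The present paper does not actually prove this proposition; it is quoted from \cite{BBM15} as background for the containment-exponent framework, so there is no proof here to compare yours against.

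That said, your argument is correct and is the natural one (and, as far as I know, essentially the one in \cite{BBM15}). The combinatorial step is right: for $n=2^k\ge 8$ the only two-element doad sets of $\tttree_n$ are $\{1,2\}$ and $\{n-1,n\}$, while the $2^{k-1}\ge 4$ sibling pairs of $\hierarchicaltree_k$ pull back under $\pi^{-1}$ to a partition of $\{1,\dots,n\}$ into pairs, so some pair $\{a,b\}$ avoids both. Your choice of $c$ with $2\le c\le n-2$ and $a\le c<b$ then exists precisely because $\{a,b\}\notin\{\{1,2\},\{n-1,n\}\}$, and the hypothesis $k\ge 3$ is used exactly where you say. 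The tensor step is also fine: with $t=A\otimes B$ and $A,B$ of tensor rank $r$, every singleton and every initial-segment flattening of $t$ factors through a flattening of $A$ alone or of $B$ alone, hence has rank $\le r$, so $t\in\tns(r\,\ttnetwork_{2^k})$ by Proposition~\ref{prop: equivalentdef}; and since $a\in P$, $b\in Q$, the $\{a,b\}$-flattening of $t$ is the Kronecker product of $\operatorname{fl}_{\{a\}}(A)$ and $\operatorname{fl}_{\{b\}}(B)$, giving rank $r\cdot r=r^2>r^2-1$. The point you flag as fiddly---that the single product tensor meets all the train-track constraints simultaneously---is handled cleanly by the observation that each constraint sees only one of the two factors.
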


\subsection{Contractions and DOAD Sets}
The definition of $\tns(\network)$ is an inner description, i.e., a
parameterization as the image of a map. For an outer description with
implicit conditions satisfied by its elements, we need tensor
contractions.

\begin{definition}
The \textit{contraction map of tensors} $\contract$ for vector spaces
$V_1, V_2$ is the map defined on decomposable tensors by
\begin{align*}
\contract : V_1^* \otimes (V_1 \otimes V_2 ) & \to V_2\\
g \otimes (v_1 \otimes v_2) & \mapsto g \contract(v_1 \otimes v_2) := g(v_1)v_2
\end{align*}
and extended linearly. 
For $t \in V_1 \otimes V_2$ we will use the shorthand $V_1^* \contract
t$ for the image of the map
\begin{align*}
V_1^* & \to V_2\\
g & \mapsto g \contract t \,.
\end{align*}
\end{definition}

Let us note right away that
\begin{align*}
V_2^* & \to V_1\\
g & \mapsto g \contract t 
\end{align*}
is the dual/transpose linear map, and in particular has the same rank.

Here comes the promised outer description.

\begin{prop}[Proposition~2.6, Lemma~2.8~\cite{BBM15}] \label{prop: equivalentdef} 
  Let $\network = (\tree,\ambientspaces,f)$ be a tensor network.
  Then the variety of tensor network states $\tns(\network)$ is (set
  theoretically) the locus of tensors $t \in V_\rootvertex$
  such that at any vertex $v \in \vertices$  we have 
  $$
    \dim 
  \Big( V_v^*
  \contract t \Big)
  =
  \dim \biggl( \Big(
  \bigotimes_{\ell \in \antidescendants{v}} V_\ell \Big)^*
  \contract t \biggl) \ \leq \ f_v \,.$$
\end{prop}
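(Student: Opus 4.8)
The plan is to treat the two assertions of the statement separately. The equality of the two displayed dimensions comes for free: under the identification $V_\rootvertex = V_v \otimes \bigl( \bigotimes_{\ell \in \antidescendants{v}} V_\ell \bigr)$, the contractions $V_v^* \contract t$ and $\bigl( \bigotimes_{\ell \in \antidescendants{v}} V_\ell \bigr)^* \contract t$ are exactly the two maps $g \mapsto g \contract t$ attached to the two tensor factors of $V_\rootvertex$, and these are transpose to one another, hence of equal rank (as already observed right after the definition of $\contract$). So all the content lies in the set-theoretic equality between $\tns(\network)$ and the locus $X \subseteq V_\rootvertex$ of tensors $t$ with $\dim\bigl(V_v^* \contract t\bigr) \le f_v$ at every vertex $v$; I would prove the two inclusions in turn.

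For $\tns(\network) \subseteq X$, fix $t \in \tns(\network)$ with witnessing subspaces $U_v \subseteq V_v$, so that $\dim U_v \le f_v$, $t \in U_\rootvertex$, and $U_w \subseteq U_{w_1} \otimes U_{w_2}$ at every node $w$ with children $w_1,w_2$. Fix a vertex $v$. Starting from $t \in U_\rootvertex$ and repeatedly substituting $U_w \subseteq U_{w_1} \otimes U_{w_2}$ for the vertex $w$ lying on the path from the root down to $v$, one gets $t \in U_v \otimes \bigl( \bigotimes_{w'} U_{w'} \bigr)$, where $w'$ runs over the off-path children met along the way. Since the sets $\descendants{w'}$ together cover exactly $\antidescendants{v}$, this gives $t \in U_v \otimes \bigl( \bigotimes_{\ell \in \antidescendants{v}} V_\ell \bigr)$. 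Writing $t$ as a sum of at most $\dim U_v$ decomposable tensors whose first legs lie in $U_v$, one sees that $V_v^* \contract t$ is contained in the span of the (at most $\dim U_v$) second legs, so $\dim\bigl(V_v^* \contract t\bigr) \le \dim U_v \le f_v$; hence $t \in X$.

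For $X \subseteq \tns(\network)$, given $t \in X$ I would simply exhibit the witnessing subspaces: set $U_v := \bigl( \bigotimes_{\ell \in \antidescendants{v}} V_\ell \bigr)^* \contract t \subseteq V_v$ at every vertex $v$. By the dimension equality above together with $t \in X$, we have $\dim U_v \le f_v$, and at the root $\antidescendants{\rootvertex} = \varnothing$ forces $U_\rootvertex = \Span(t) \ni t$. It remains to verify the nesting $U_v \subseteq U_{v_1} \otimes U_{v_2}$ at a node $v$ with children $v_1,v_2$. Put $W := \bigotimes_{\ell \in \antidescendants{v}} V_\ell$, so that $V_\rootvertex = V_{v_1} \otimes V_{v_2} \otimes W$; since $\antidescendants{v_1} = \antidescendants{v} \sqcup \descendants{v_2}$ (and symmetrically for $v_2$), we have $U_{v_1} = (V_{v_2} \otimes W)^* \contract t$ and $U_{v_2} = (V_{v_1} \otimes W)^* \contract t$. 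Applying twice the elementary fact that a tensor $s \in A \otimes B$ lies in $\bigl( B^* \contract s \bigr) \otimes B$ --- first to the splitting $V_{v_1} \otimes ( V_{v_2} \otimes W )$, then to $V_{v_2} \otimes ( U_{v_1} \otimes W )$, using that contracting against a dual vector only sees the subspace in which $t$ already lives (so that replacing $V_{v_1}^*$ by $U_{v_1}^*$ leaves the image unchanged) --- yields $t \in U_{v_1} \otimes U_{v_2} \otimes W$. Then $g \contract t \in U_{v_1} \otimes U_{v_2}$ for every $g \in W^*$, so $U_v = W^* \contract t \subseteq U_{v_1} \otimes U_{v_2}$, as needed. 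Hence $t \in \tns(\network)$.

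I expect the only genuine obstacle to be the bookkeeping in the last paragraph: checking that the contraction spaces $U_{v_1},U_{v_2}$ defined globally coincide with those obtained by peeling off one tensor factor at a time, and that iterating the two-factor fact really delivers the three-factor statement $t \in U_{v_1} \otimes U_{v_2} \otimes W$ (a Tucker/multilinear-rank type phenomenon). The forward inclusion and the dimension equality, by contrast, are routine linear algebra.
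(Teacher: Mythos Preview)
The paper does not supply its own proof of this proposition; it is quoted verbatim from \cite{BBM15} (Proposition~2.6 and Lemma~2.8 there) and used as a black box. So there is nothing in the present paper to compare your argument against.

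That said, your sketch is a correct self-contained proof. The dimension equality is exactly the rank-of-transpose remark made in the paper right after the definition of $\contract$. The inclusion $\tns(\network)\subseteq X$ is clean: walking the root-to-$v$ path and peeling off the nesting gives $t\in U_v\otimes\bigl(\bigotimes_{\ell\in\antidescendants{v}}V_\ell\bigr)$, whence the contraction image has dimension at most $\dim U_v\le f_v$. For $X\subseteq\tns(\network)$ your choice $U_v:=\bigl(\bigotimes_{\ell\in\antidescendants{v}}V_\ell\bigr)^*\contract t$ is the canonical one, and the two-step application of ``$s\in(B^*\contract s)\otimes B$'' to the splitting $V_{v_1}\otimes V_{v_2}\otimes W$ does indeed yield $t\in U_{v_1}\otimes U_{v_2}\otimes W$; your parenthetical about replacing $V_{v_1}^*$ by $U_{v_1}^*$ is precisely the point that makes the second step go through. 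The bookkeeping you flag as the only obstacle is routine once written out, so nothing further is needed.
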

In the literature, this dimension is often phrased as the rank of the
flattening of $t$ corresponding to the split
$(\descendants{v}\,|\,\antidescendants{v})$ of $\tree$.

Observe that the condition at the root $\rootvertex$ requires a
subspace of $\kk$ to have dimension at most $f_\rootvertex$. So, as long as
$f$ is positive, the coordinate $f_\rootvertex$ is irrelevant.
The following notion of a doad set will be crucial in our comparison
of two encodings via tensor formats.

\begin{definition}
If we consider a tree $\tree$, then we say a subset of its leaves $S
\subset \leaves$ is a \textit{doad set} (descendant or
anti-descendant leaves) for $\tree$ if there is a vertex $v$ of
$\tree$ such that $S = \descendants{v}$ or $S = \antidescendants{v}$.
We denote the collection of all doad sets by $\doad(\tree)$.
\end{definition}

Observe that the family of descendant sets determines the tree
uniquely while for each of the three different trees on $\leaves =
\{1,2,3\}$ every subset of $\leaves$ is doad.

\begin{lemma} \label{lem: disjointdoads}
  Suppose a proper subset $S \subsetneq \leaves$ is covered by doad
  sets $S_1, \ldots, S_k$ with minimal $k$. Then the $S_i$ are pair
  wise disjoint. 
\end{lemma}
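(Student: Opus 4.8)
The plan is to argue by contradiction exploiting the minimality of $k$, together with the key structural fact about doad sets: any two doad sets of a tree are either nested or disjoint (this is the standard "laminar family" property, which follows because descendant sets of a tree form a laminar family, and complements of a laminar family interact with it only in the same way — in fact $\descendants{v}$ and $\descendants{w}$ are nested or disjoint, $\descendants{v}$ and $\antidescendants{w}=\leaves\setminus\descendants{w}$ satisfy the same trichotomy, and likewise for two anti-descendant sets). So the first step is to record this trichotomy: for doad sets $A, B \in \doad(\tree)$, one of $A \subseteq B$, $B \subseteq A$, or $A \cap B = \vac$ holds. I would prove this quickly from the tree poset structure, splitting into cases according to whether each of $A,B$ is of descendant or anti-descendant type.

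Next, suppose for contradiction that $S_i \cap S_j \neq \vac$ for some $i \neq j$ in a minimal cover $S_1,\dots,S_k$ of $S$. By the trichotomy, without loss of generality $S_i \subseteq S_j$. But then $S_j$ alone covers everything $S_i$ contributed, so $S_1,\dots,S_{i-1},S_{i+1},\dots,S_k$ is still a cover of $S$, now with $k-1$ sets — contradicting minimality of $k$. Hence the $S_i$ are pairwise disjoint.

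The main obstacle — really the only point requiring care — is verifying the trichotomy in the mixed case, where $A = \descendants{v}$ and $B = \antidescendants{w}$. Here I would use that $\descendants{v} \cap \descendants{w}$ is $\descendants{v}$, $\descendants{w}$, or $\vac$ depending on the $\descendantof$-relation between $v$ and $w$ and their positions; then $A \cap B = \descendants{v} \setminus \descendants{w}$, and one checks: if $v \descendantof w$ then $A \cap B = \vac$; if $w \descendantof v$ then $B \subseteq A$; and if $v,w$ are incomparable then $\descendants{v} \cap \descendants{w} = \vac$, so $A = \descendants{v} \subseteq \leaves \setminus \descendants{w} = B$. The case $A = \antidescendants{v}$, $B = \antidescendants{w}$ reduces to the descendant case by taking complements (complementation swaps $\subseteq$ for $\supseteq$ and sends disjointness to "union is everything", but since $S \subsetneq \leaves$ is proper one never needs that degenerate possibility inside the cover — though the bare trichotomy for two anti-descendant sets does need the observation that $\antidescendants{v} \cup \antidescendants{w} = \leaves$ iff $\descendants{v} \cap \descendants{w} = \vac$, and in that case incomparability gives $\descendants{v} \subseteq \antidescendants{w}$, i.e. the anti-descendant sets are \emph{not} forced to be nested or disjoint). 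So in fact I should be careful: two anti-descendant sets need \emph{not} satisfy the clean trichotomy. The fix is that I only need the trichotomy to run the minimality argument, and whenever $S_i \cap S_j \neq \vac$ with both of anti-descendant type and neither nested, their union is all of $\leaves \supsetneq S$, so $\{S_i\}$ alone would cover $S$, again contradicting minimality unless $k=1$ (and $k=1$ is trivial). Thus in every case a genuine overlap lets us drop a set from the cover, which is the contradiction we want.
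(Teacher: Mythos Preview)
Your approach is essentially the paper's own: a case analysis on the descendant/anti-descendant types of a pair of doad sets, establishing that any two such sets are nested, disjoint, or (in the anti--anti case) have union all of $\leaves$, and then using minimality to discard a redundant set whenever two overlap. The paper states the outcome a touch more structurally---a minimal cover contains at most one anti-descendant set $\antidescendants{w}$, with the remaining descendant sets sitting below $w$---but the underlying logic is identical.

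There is one slip in your final paragraph. When $S_i=\antidescendants{v}$ and $S_j=\antidescendants{w}$ overlap with neither nested in the other, you correctly deduce $S_i\cup S_j=\leaves$, but ``so $\{S_i\}$ alone would cover $S$'' does not follow from that. The correct finish is: in an exact cover $S=\bigcup_m S_m$ each $S_m\subseteq S$, hence $\leaves=S_i\cup S_j\subseteq S$, contradicting $S\subsetneq\leaves$. Thus this configuration is simply impossible, and the overlap-implies-redundancy argument handles all remaining cases. With that one-line correction your proof is complete.
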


\begin{proof}
Observe that for any $v,w \in \vertices$
\begin{itemize}
\item $\descendants{v}$ and $\descendants{w}$ are either disjoint or one
  is contained in the other, 
\item hence $\antidescendants{v}$ and $\antidescendants{w}$ either
  cover all of $\leaves$ or one is contained in the other, 
\item $\descendants{v} \cup \antidescendants{w} = \leaves$ if $v \ge_\tree
  w$, and
\item $\descendants{v} \subseteq \antidescendants{w}$ if $v$ and $w$
  are incomparable in $\tree$.
\end{itemize}
So, if $S = \bigcup_{i=1}^k S_i$ is a minimal covering by doad sets,
there can be at most one set of the form $\antidescendants{w}$ in the
collection, and then $w$ must be a common ancestor of the remaining
sets.
\end{proof}

We can now generalize the central comparison tool
\cite[Lemma~3.1]{BBM15} to our setting.

\begin{lemma}[{Asymptotic variant of~\cite[Lemma~3.1]{BBM15}}]
  \label{lemma:asymptotic-containment}
Consider two tensor networks $\network = (\tree,\ambientspaces,f)$ and
$\network' = (\tree',\ambientspaces,f')$ on the same set of leaves
$\leaves$.

Now, let $c_0 \in \N$ and assume that for every vertex $v'$ of
$\tree'$ either $\descendants{v'}$ or $\antidescendants{v'}$ is a
union of at most $c_0$ sets $S_i \in \doad(\tree)$. Take $c > c_0$,
then for all $r \gg 0$ 
$$
\tns(r \, \network) \subseteq \tns(r^c \, \network') \,.
$$
\end{lemma}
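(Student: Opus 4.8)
The plan is to use the outer description from Proposition~\ref{prop: equivalentdef}: a tensor $t$ lies in $\tns(\network')$ precisely when, at every vertex $v'$ of $\tree'$, the flattening rank $\dim\bigl(V_{v'}^* \contract t\bigr)$ is at most $f'_{v'}$. So it suffices to start with $t \in \tns(r\,\network)$ and show that each such flattening rank of $t$ is bounded by $r^c f'_{v'}$ once $r$ is large enough. Fix a vertex $v'$ of $\tree'$. By hypothesis either $\descendants{v'}$ or $\antidescendants{v'}$ is a union of at most $c_0$ doad sets $S_1,\dots,S_m$ of $\tree$ (with $m \le c_0$); since the flattening rank at $v'$ is the same whether computed via $\descendants{v'}$ or its complement (the transpose remark after the contraction definition), we may assume $\descendants{v'} = S_1 \cup \cdots \cup S_m$. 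By Lemma~\ref{lem: disjointdoads} we may further assume the $S_i$ are pairwise disjoint, after passing to a minimal subcover; this only shrinks $m$.

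Next I would control the flattening rank of $t$ across the split $(\descendants{v'} \mid \antidescendants{v'})$ in terms of the ranks across the individual splits $(S_i \mid \leaves \setminus S_i)$. The key linear-algebra fact is submultiplicativity of flattening ranks under refinement of a partition: if $\leaves$ is partitioned into blocks and we know the rank of the flattening of $t$ with respect to each block sitting alone against the rest, then the rank of the flattening with respect to the union of a sub-collection of blocks is at most the product of the individual ranks. Concretely, $V_{\descendants{v'}}^* \contract t$ factors through the iterated contractions against $V_{S_1}^*, \dots, V_{S_m}^*$, and each such contraction multiplies the dimension of the image by at most the rank of the corresponding single flattening. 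Since each $S_i \in \doad(\tree)$, that single flattening rank is exactly $\dim\bigl((\bigotimes_{\ell \in \leaves \setminus S_i} V_\ell)^* \contract t\bigr)$, which for $t \in \tns(r\,\network)$ is bounded by $r f_{w_i}$ where $w_i$ is a vertex of $\tree$ with $S_i = \descendants{w_i}$ or $S_i = \antidescendants{w_i}$. Hence the flattening rank of $t$ at $v'$ is at most $r^m \prod_i f_{w_i} \le r^{c_0} \cdot F$, where $F = \max_{w \in \vertices(\tree)} f_w$ is a constant independent of $r$.

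Finally I would absorb the constant: for $r$ large enough we have $r^{c_0} \cdot F \le r^{c_0+1} \le r^c$ (using $c > c_0$, so $c \ge c_0 + 1$, and $r \ge F$), hence the flattening rank of $t$ at $v'$ is at most $r^c \le r^c f'_{v'}$. Taking $r$ large enough that this holds simultaneously for all vertices $v'$ of $\tree'$ — finitely many conditions — gives $t \in \tns(r^c\,\network')$ for all $r \gg 0$, as claimed.

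The main obstacle I anticipate is the submultiplicativity step: making precise that the rank of a flattening with respect to a union of blocks is bounded by the product of the ranks of the flattenings with respect to the individual blocks. This is where the disjointness furnished by Lemma~\ref{lem: disjointdoads} is essential, since it lets us regard $\bigotimes_{\ell \in \descendants{v'}} V_\ell$ as $\bigotimes_i \bigl(\bigotimes_{\ell \in S_i} V_\ell\bigr)$ and perform the contractions one block at a time; the bound then follows because contracting a tensor lying in $A \otimes B$ against all of $A^*$ produces a subspace of $B$ of dimension at most $\operatorname{rank}$ of the $A$-flattening, applied iteratively. I would state this as a small standalone sublemma on iterated flattening ranks and cite it; everything else is bookkeeping with Proposition~\ref{prop: equivalentdef} and the choice of threshold for $r$.
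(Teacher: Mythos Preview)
Your proposal is correct and follows essentially the same route as the paper's own proof: use Proposition~\ref{prop: equivalentdef} for the outer description, pass via Lemma~\ref{lem: disjointdoads} to pairwise disjoint doad sets $S_i$, invoke the submultiplicativity of flattening ranks across the blocks (which the paper writes as the one-line containment $\bigl(\bigotimes_i V_{S_i}^*\bigr)\contract t \subseteq \bigotimes_i\bigl(V_{S_i}^*\contract t\bigr)$), bound each factor by $r f_{w_i}$, and absorb the leftover constant into $r^{c-c_0}$. One minor bookkeeping slip: the product $\prod_i f_{w_i}$ is at most $F^{c_0}$, not $F$, but your absorption step works verbatim with this correction since $r^{c-c_0}\to\infty$.
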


\begin{proof}
Fix a tensor $t \in \tns(r \, \network)$ and consider any vertex $v'$ of $\tree'$. We need to show that
\[
\dim \biggl( \Big(
  \bigotimes_{\ell \in \descendants{v'}} V_\ell \Big)^*
  \contract t \biggl) \leq r^c f'_{v'} \text{ or, equivalently, }
  \dim \biggl( \Big(
  \bigotimes_{\ell \in \antidescendants{v'}} V_\ell \Big)^*
  \contract t \biggl) \leq r^c f'_{v'}.
\]
By assumption, either $\descendants{v'}$ or $\antidescendants{v'}$ is a union of at most $c_0$ sets $S_i \in \doad(\tree)$. Let $S_i = \descendants{v_i}$ or $S_i = \antidescendants{v_i}$ for some vertex $v_i$ of $\mathcal{T}$. By Lemma \ref{lem: disjointdoads}, we can assume these sets $S_i$ are disjoint. It is then enough to show that
\[
\dim \biggl( 
\bigotimes_{i=1}^{c_0}
\Big(
  \bigotimes_{\ell \in S_i} V_\ell \Big)^*
  \contract t \biggl) \leq r^c f'_{v'}.
\]
By Proposition \ref{prop: equivalentdef}, we have that $\dim 
\Big(  \bigotimes_{\ell \in S_i} V_\ell \Big)^*  \contract t  \leq r f_{v_i}$ for each $i$. Thus, we obtain
\[
\dim \Bigg[ \biggl( 
\bigotimes_{i=1}^{c_0}
\Big(
  \bigotimes_{\ell \in S_i} V_\ell \Big)^* \biggl)
  \contract t \Bigg]
  \le
    \dim \Bigg[ 
\bigotimes_{i=1}^{c_0}
\biggl( \Big(
  \bigotimes_{\ell \in S_i} V_\ell \Big)^*
  \contract t \biggl) \Bigg]
    \leq r^{c_0} f_{v_1} \cdots f_{v_{c_0}} \leq r^c f'_{v'},
\]
where the last inequality follows from the fact that $c_0 < c$ and $r \gg 0$.
\end{proof}

If we want to obtain a non-asymptotic result which contains
Propositions~\ref{prop:HackbuschInverse1}
and~\ref{prop:HackbuschInverse2} as special cases, we need the following notion.

\begin{definition}
Consider two tensor networks $\network = (\tree,\ambientspaces,f)$ and
$\network' = (\tree',\ambientspaces,f')$ on the same set of leaves
$\leaves$.
%
We say that $\tns(\network)$ is \textit{trivially contained} in
$\tns(\network')$ if for every $v' \in \vertices' = \vertices(\tree')$
there is a covering $\descendants{v'} = \bigcup_{i=1}^c S_i$ or a
covering $\antidescendants{v'} = \bigcup_{i=1}^c S_i$ by doad sets of
$\tree$, i.e., there are $v_1, \ldots, v_c \in
\vertices=\vertices(\tree)$ so that $S_i = \descendants{v_i}$ or $S_i
= \antidescendants{v_i}$ for each $i$, with the property
$$
\prod_{i=1}^c f(v_i) \ \le \ f'(v') \,.
$$
\end{definition}

For example, for $f = \one$, $\tns(\network)$ is trivially contained
in any other tensor network variety on $(\leaves, \ambientspaces)$.
($\tns(\tree,\ambientspaces,\one)$ is the locus of decomposable (rank
one) tensors, whatever $\tree$.)

\begin{lemma}
  \label{lemma:trivial}
If $\tns(\network)$ is \textit{trivially contained} in
$\tns(\network')$, then $\tns(\network) \subseteq \tns(\network')$.
\end{lemma}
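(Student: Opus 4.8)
The plan is to mirror the proof of Lemma~\ref{lemma:asymptotic-containment}, but now using the quantitative hypothesis of trivial containment to avoid any asymptotic slack. Fix a tensor $t \in \tns(\network)$. By Proposition~\ref{prop: equivalentdef}, it suffices to check, for every vertex $v' \in \vertices'$, that
$$
\dim \biggl( \Big( \bigotimes_{\ell \in \descendants{v'}} V_\ell \Big)^* \contract t \biggr) \ \le \ f'(v') \,,
$$
or equivalently the same inequality with $\antidescendants{v'}$ in place of $\descendants{v'}$ (these two dimensions agree, again by Proposition~\ref{prop: equivalentdef}). Trivial containment hands us, for each $v'$, a covering of $\descendants{v'}$ or of $\antidescendants{v'}$ by doad sets $S_1, \ldots, S_c \in \doad(\tree)$ with $\prod_{i=1}^c f(v_i) \le f'(v')$, where $S_i = \descendants{v_i}$ or $S_i = \antidescendants{v_i}$.

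Next I would reduce to the disjoint case: by Lemma~\ref{lem: disjointdoads}, a \emph{minimal} subcovering of the relevant set $\descendants{v'}$ (or $\antidescendants{v'}$) consists of pairwise disjoint doad sets, and discarding the superfluous $S_i$ only decreases the product $\prod f(v_i)$, so the inequality $\prod f(v_i) \le f'(v')$ still holds. Now, reindexing, assume $S_1, \ldots, S_c$ are disjoint and cover (exactly, after intersecting with the target) the leaf set in question. Then the relevant flattening space factors as
$$
\Big( \bigotimes_{i=1}^{c} \bigotimes_{\ell \in S_i} V_\ell \Big)^* \contract t
\ \subseteq \
\bigotimes_{i=1}^{c} \Big( \Big( \bigotimes_{\ell \in S_i} V_\ell \Big)^* \contract t \Big) \,,
$$
so its dimension is at most $\prod_{i=1}^{c} \dim\big( ( \bigotimes_{\ell \in S_i} V_\ell )^* \contract t \big)$. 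Since $t \in \tns(\network)$, Proposition~\ref{prop: equivalentdef} applied at each vertex $v_i$ of $\tree$ gives $\dim\big( ( \bigotimes_{\ell \in S_i} V_\ell )^* \contract t \big) \le f(v_i)$ (the bound is the same whether $S_i$ is a descendant or anti-descendant set, since the two flattenings have the same rank). Multiplying, the dimension of the $v'$-flattening of $t$ is at most $\prod_{i=1}^{c} f(v_i) \le f'(v')$, which is exactly the condition of Proposition~\ref{prop: equivalentdef} needed to conclude $t \in \tns(\network')$.

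The one genuine subtlety — the step I expect to need the most care — is the sub/super-multiplicativity of the contraction and the bookkeeping of which leaf set one is contracting against. One must make sure that when a minimal covering of $\descendants{v'}$ is used, the disjoint union of the $S_i$ is literally $\descendants{v'}$ (not something strictly larger), which is why passing to a minimal subcovering is essential: a minimal covering of a set $S$ by subsets that are pairwise disjoint must have union exactly $S$. The inclusion of tensor spaces $( \bigotimes_i W_i^* ) \contract t \subseteq \bigotimes_i ( W_i^* \contract t )$ is the same elementary fact already invoked in the proof of Lemma~\ref{lemma:asymptotic-containment}, applied here with each $W_i = \bigotimes_{\ell \in S_i} V_\ell$; everything else is a direct, slack-free specialization of that argument with $c_0 = c$ and no need to pass to $r \gg 0$.
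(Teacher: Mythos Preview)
The paper states Lemma~\ref{lemma:trivial} without proof (it is meant as the evident non-asymptotic specialization of Lemma~\ref{lemma:asymptotic-containment}), and your argument is correct and is exactly the intended one: pass to a minimal, hence disjoint, subcovering via Lemma~\ref{lem: disjointdoads}, bound each factor by Proposition~\ref{prop: equivalentdef}, and multiply.

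One cosmetic point: the displayed ``inclusion''
\[
\Big( \bigotimes_{i} \bigotimes_{\ell \in S_i} V_\ell \Big)^* \contract t
\ \subseteq \
\bigotimes_{i} \Big( \Big( \bigotimes_{\ell \in S_i} V_\ell \Big)^* \contract t \Big)
\]
is not literally an inclusion of subspaces --- the left-hand side lives in $\bigotimes_{\ell \notin S} V_\ell$ while each factor on the right lives in $\bigotimes_{\ell \notin S_i} V_\ell$, so the two sides sit in different ambient spaces. What you actually need (and immediately use) is only the \emph{dimension} inequality, which is exactly how the paper phrases it in the proof of Lemma~\ref{lemma:asymptotic-containment}; replacing $\subseteq$ by $\dim[\cdot] \le \dim[\cdot]$ fixes this and nothing else in your argument changes.
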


\begin{lemma}[{Variant of \cite[Lemma~3.1]{BBM15}}] \label{lemma:containment}
Consider two tensor networks $\network = (\tree,\ambientspaces,f)$ and
$\network' = (\tree',\ambientspaces,f')$ on the same set of leaves
$\leaves$, so that $\tns(\network)$ is trivially contained in
$\tns(\network')$.

Now, let $c \in \N$ and assume that for every vertex $v'$ of
$\tree'$ either $\descendants{v'}$ or $\antidescendants{v'}$ is a
union of at most $c$ sets $S_i \in \doad(\tree)$. Then for all $r \in \N$ 
$$
\tns(r \, \network) \subseteq \tns(r^c \, \network') \,.
$$
\end{lemma}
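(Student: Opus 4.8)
The plan is to combine the two previous lemmas: Lemma~\ref{lemma:trivial}, which gives the ordinary (unscaled) containment $\tns(\network) \subseteq \tns(\network')$ from trivial containment, and the argument inside the proof of Lemma~\ref{lemma:asymptotic-containment}, which does the dimension bookkeeping at each vertex $v'$ of $\tree'$. The point of the present statement is that when $\tns(\network)$ is \emph{trivially} contained in $\tns(\network')$ — as opposed to merely being contained asymptotically — we get the bound with exponent exactly $c$ and no ``for $r \gg 0$'' caveat. So the proof is really about checking that the same chain of inequalities goes through with the sharp constant $c$ rather than being forced to pass to $c > c_0$.

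First I would observe that scaling behaves well: $\tns(r\,\network) = \tns(\tree,\ambientspaces,rf)$ and $\tns(r^c\,\network') = \tns(\tree',\ambientspaces,r^c f')$, so it suffices to show that the network $r\,\network$ is trivially contained in the network $r^c\,\network'$ and then invoke Lemma~\ref{lemma:trivial}. Concretely, I would fix a vertex $v'$ of $\tree'$. By hypothesis, either $\descendants{v'}$ or $\antidescendants{v'}$ is a union of at most $c$ doad sets $S_1, \ldots, S_k$ of $\tree$ (with $k \le c$), and by Lemma~\ref{lem: disjointdoads} we may take a minimal such cover, which is then pairwise disjoint; write $S_i = \descendants{v_i}$ or $S_i = \antidescendants{v_i}$. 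Because $\tns(\network)$ is trivially contained in $\tns(\network')$, at this same $v'$ there is some (possibly different) doad cover witnessing $\prod_j f(w_j) \le f'(v')$; but for the \emph{scaled} networks I can just use the disjoint cover $S_1,\dots,S_k$ directly: I want $\prod_{i=1}^{k} (r f(v_i)) \le r^c f'(v')$.

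The key step is then the dimension estimate exactly as in the proof of Lemma~\ref{lemma:asymptotic-containment}: for a tensor $t \in \tns(r\,\network)$, Proposition~\ref{prop: equivalentdef} gives $\dim\bigl( (\bigotimes_{\ell \in S_i} V_\ell)^* \contract t \bigr) \le r\, f(v_i)$ for each $i$, and submultiplicativity of the contraction over the disjoint union yields
\[
\dim \Bigl( \Bigl( \bigotimes_{\ell \in \descendants{v'}} V_\ell \Bigr)^* \contract t \Bigr)
\ \le \ \prod_{i=1}^{k} r\, f(v_i)
\ = \ r^{k} \prod_{i=1}^{k} f(v_i)
\ \le \ r^{k}\, f'(v') \cdot r^{c-k}
\ = \ r^{c}\, f'(v'),
\]
where the middle inequality $\prod_{i=1}^k f(v_i) \le f'(v')$ is the trivial-containment hypothesis applied at $v'$ (using that the minimal disjoint cover $S_1,\dots,S_k$ is itself a doad cover of $\descendants{v'}$ or $\antidescendants{v'}$, so the defining inequality of trivial containment applies to it), and $r^{c-k} \ge 1$ since $k \le c$. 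The analogous bound holds with $\antidescendants{v'}$ in place of $\descendants{v'}$, and by Proposition~\ref{prop: equivalentdef} these two dimensions agree, so in either case the flattening rank at $v'$ is at most $r^c f'(v')$.

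I expect the only real subtlety — and the place to be careful in writing — is the interplay between the cover used for the \emph{trivial containment} hypothesis and the minimal disjoint cover used for the dimension count: the definition of trivial containment quantifies over \emph{some} doad cover at $v'$, so I should phrase trivial containment (or re-derive it) for the specific minimal disjoint cover produced by Lemma~\ref{lem: disjointdoads}, rather than an arbitrary one. One clean way around this is to note that the definition of trivial containment can be taken, without loss of generality, with a minimal cover (hence disjoint by Lemma~\ref{lem: disjointdoads}), since shrinking a cover only decreases the product $\prod f(v_i)$ when all $f(v_i) \ge 1$ — so the witnessing cover may be assumed disjoint from the start. With that observed, the rest is the routine submultiplicativity chain above, and there is no asymptotic slack needed because $k \le c$ already forces $r^{c-k} \ge 1$ for every $r \in \N$.
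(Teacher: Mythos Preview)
The paper does not actually supply its own proof of this lemma; it is stated immediately before Section~3 and left without argument. So there is nothing to compare against directly, and your overall strategy---mirror the contraction estimate from the proof of Lemma~\ref{lemma:asymptotic-containment} and replace the asymptotic slack $c>c_0$ by the exact product bound coming from trivial containment---is exactly the expected one.

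There is, however, one genuine soft spot, which you flag yourself but do not fully close. In your displayed chain you use the cover $S_1,\dots,S_k$ with $k\le c$ furnished by the second hypothesis, and then invoke the inequality $\prod_{i=1}^k f(v_i)\le f'(v')$ from the trivial-containment hypothesis \emph{for that same cover}. But trivial containment only promises the existence of \emph{some} doad cover with small product; nothing forces it to be the minimum-cardinality one. Your repair---``shrink the witnessing cover, which only decreases the product since each $f(v_i)\ge 1$''---produces an \emph{irredundant} subcover, and irredundant disjoint doad covers need not have minimum cardinality. For instance, in the train-track tree on $\{1,2,3,4\}$ the set $\{1,2,3\}$ admits the irredundant disjoint doad cover $\{1,2\}\cup\{3\}$ of size~$2$, while the minimum-size doad cover is the single set $\{1,2,3\}$. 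So after shrinking, the witnessing cover may still have $m>c$ parts, and then $r^{m}f'(v')$ does not bound $r^{c}f'(v')$.

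The clean fix is to read the two hypotheses as speaking about the \emph{same} cover at each $v'$: trivial containment is witnessed by a doad cover of size at most $c$. This is the reading under which the lemma specialises to Propositions~\ref{prop:HackbuschInverse1} and~\ref{prop:HackbuschInverse2} (there $f=f'=\one$, so every cover witnesses the product bound and the issue evaporates), and it is the reading under which your chain of inequalities is watertight. Under that interpretation your proof is complete; as the lemma is literally phrased, the two hypotheses may point to different covers and I do not see how to close the gap without the combined reading.
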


\section{Containment Exponents}

We now introduce one of our main definitions, which will be very
useful when discussing containments of tensor network varieties in
further sections.

\begin{definition}
Let $\tree$ and $\tree'$ be two full binary trees on a common set
$\leaves$ of leaves. A \textit{tensor network exponent} $\tne =
\tne_{\tree,\tree'}$ for $\tree$ in $\tree'$ is a positive number such
that for all $f,f' \in \N^\leaves$, and all $c
> \tne$ there is an $r_0 \in \N$ so that
$$
\tns(r \, \network) \subseteq \tns(r^c \, \network')
$$
for all $\ambientspaces$, and all $r \ge r_0$.
\end{definition}

In this language, the number $c_0$ in
Lemma~\ref{lemma:asymptotic-containment} is a tensor network exponent.
Comparing Lemma~\ref{lemma:asymptotic-containment}
versus Lemma~\ref{lemma:containment}, one can develop a parallel
non-asymptotic theory with non-asymptotic tensor network exponents,
based on trivial containment of the input networks. In the sequel, we
lay out the asymptotic theory only. The non-asymptotic theory does not
need essentially different ideas, but the statements will always make
assumptions on $f,f' \in \N^\leaves$ rather than depending only on
the trees $\tree$ and $\tree'$.

A priory, it is not obvious that for any two trees $\tree, \tree'$
there is a finite exponent. But
Lemma~\ref{lemma:asymptotic-containment} does yield a bound.

\begin{prop} \label{prop:trivial-exponent}
  For two trees $\tree, \tree'$ on a common set of $n$ leaves,
  $\lfloor \frac{n}{2} \rfloor$
  is a tensor network exponent.
\end{prop}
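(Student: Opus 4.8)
The plan is to invoke Lemma~\ref{lemma:asymptotic-containment} with $c_0 = \lfloor \frac{n}{2} \rfloor$, so the whole task reduces to verifying its combinatorial hypothesis: for every vertex $v'$ of $\tree'$, at least one of $\descendants{v'}$ or $\antidescendants{v'}$ is a union of at most $\lfloor \frac{n}{2} \rfloor$ doad sets of $\tree$. Fix such a $v'$ and let $S \in \{\descendants{v'}, \antidescendants{v'}\}$ be the one of the two with $|S| \le \lfloor \frac{n}{2} \rfloor$ (one of them always has size at most half of $n$ since the two partition $\leaves$). The key observation is that every singleton $\{\ell\}$ with $\ell \in \leaves$ is itself a doad set of $\tree$, namely $\{\ell\} = \descendants{\ell}$ since $\ell$ is a leaf. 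Hence $S = \bigcup_{\ell \in S} \{\ell\}$ is a covering of $S$ by $|S| \le \lfloor \frac{n}{2} \rfloor$ doad sets of $\tree$. This establishes the hypothesis of Lemma~\ref{lemma:asymptotic-containment} with $c_0 = \lfloor \frac{n}{2} \rfloor$, and the lemma then gives, for every $c > \lfloor \frac{n}{2} \rfloor$ and all $f, f' \in \N^\leaves$, an $r_0$ with $\tns(r\,\network) \subseteq \tns(r^c\,\network')$ for $r \ge r_0$; i.e., $\lfloor \frac{n}{2} \rfloor$ is a tensor network exponent.

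There is one edge case worth a sentence: the definition of tensor network exponent quantifies over $c > \tne$, and Lemma~\ref{lemma:asymptotic-containment} requires $c > c_0$, so the indices line up exactly with $\tne = c_0 = \lfloor \frac{n}{2} \rfloor$; no off-by-one adjustment is needed. One should also note that when $|S| = 0$ (the vertex $v'$ is such that $\descendants{v'} = \leaves$, i.e. $v' = \rootvertex(\tree')$, giving $\antidescendants{v'} = \varnothing$), the empty union is vacuously a covering by zero doad sets, which is fine; and for $n \le 1$ the statement is degenerate but trivially true, so we may assume $n \ge 2$.

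I do not expect a genuine obstacle here — the statement is essentially a direct corollary of Lemma~\ref{lemma:asymptotic-containment} combined with the trivial fact that leaves are doad sets. The only thing requiring a moment's care is confirming that $\min(|\descendants{v'}|, |\antidescendants{v'}|) \le \lfloor \frac{n}{2} \rfloor$, which is immediate from $|\descendants{v'}| + |\antidescendants{v'}| = n$, since the smaller of two non-negative integers summing to $n$ is at most $\lfloor \frac{n}{2} \rfloor$.
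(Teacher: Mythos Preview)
Your proposal is correct and follows exactly the paper's own argument: cover the smaller of $\descendants{v'}$ and $\antidescendants{v'}$ by singleton doad sets (leaves), then apply Lemma~\ref{lemma:asymptotic-containment}. The paper's proof is just a two-sentence version of what you wrote; your additional remarks on edge cases and the matching of $c_0$ with $\tne$ are fine but not strictly needed.
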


\begin{proof}
  Singletons $\{\ell\} \subseteq \leaves$ are doad. So for every
  vertex $v'$ of $\tree'$, we can cover the smaller of
  $\descendants{v'}$ and $\antidescendants{v'}$ by its one-element
  subsets.
\end{proof}

\subsection{Exponents from the Poset Structure}
We use the language of posets for the descendant relation
$\descendantof$ to formulate a combinatorial bound which is quick to
compute, and which turns out significantly better than
Proposition~\ref{prop:trivial-exponent}.
The tree $\tree$ on the vertex set $\vertices$ yields the structure of
a join-semilattice.
For a set of 
leaves $S \subseteq \leaves$
we denote by $\lca S$ the join, i.e., the lowest common ancestor of $S$.
For $S \subseteq \vertices$, we write $\tree_{\ge S} \coloneq
\{ v \in \vertices \st v \ancestorof s \text{ for some } s \in S \}$
for the set containing all ancestors of all elements of $S$, and
similarly $\tree_{\le S} \coloneq
\{ v \in \vertices \st v \descendantof s \text{ for some } s \in S \}$
for the set containing all descendants of all elements of $S$.
Further, we use $\maxima(\mathcal{P})$ for the number of
maximal elements of the finite poset $\mathcal{P}$.
\begin{theorem} \label{thm:exponents-from-poset}
  Let $\tree$ and $\tree'$ be full binary trees on the common leaf set
  $\leaves$. Then the following number is a containment exponent.
  \[
    \tne_{\tree,\tree'} \coloneq
    \max_{S} 
    \Big\{
    \min \big\{
    \maxima(\vertices \setminus \tree_{\ge S}), \ 
    \maxima(\vertices \setminus \tree_{\ge \leaves \setminus S}), \ 
    \maxima( \tree_{\le \lca(\leaves \setminus S)} \cap \tree_{\ge S}
    ) +1, \ 
    \maxima( \tree_{\le \lca S} \cap \tree_{\ge \leaves \setminus S}
    ) +1
    \big\}
    \Big\} \,,
  \]
  where $S$ runs over all doad sets of $\tree'$.
\end{theorem}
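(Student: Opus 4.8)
The plan is to deduce the statement directly from Lemma~\ref{lemma:asymptotic-containment}. Let $c_0$ denote the number $\tne_{\tree,\tree'}$, which is a natural number. By Lemma~\ref{lemma:asymptotic-containment} it suffices to check that for \emph{every} vertex $v'$ of $\tree'$ one of the sets $\descendants{v'}$, $\antidescendants{v'}$ is a union of at most $c_0$ doad sets of $\tree$; then the lemma makes $c_0=\tne_{\tree,\tree'}$ a tensor network exponent. For $X\subseteq\leaves$ write $\mu(X)$ for the least number of doad sets of $\tree$ whose union is $X$; by Lemma~\ref{lem: disjointdoads} a minimal such family is automatically pairwise disjoint. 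Putting $S=\descendants{v'}$, so that $\{S,\leaves\setminus S\}=\{\descendants{v'},\antidescendants{v'}\}$ ranges over all complementary pairs of doad sets of $\tree'$ as $v'$ ranges over $\vertices(\tree')$, what we must show is $\min\{\mu(S),\mu(\leaves\setminus S)\}\le c_0$. Since $S$ is one of the doad sets of $\tree'$ over which the maximum defining $\tne_{\tree,\tree'}$ is taken, this follows once each of the four quantities $q_1(S),\dots,q_4(S)$ inside the braces of the theorem is shown to be an upper bound for $\mu(S)$ or for $\mu(\leaves\setminus S)$: indeed then
\[
  \min\{\mu(S),\mu(\leaves\setminus S)\}\ \le\ \min\{q_1(S),\dots,q_4(S)\}\ \le\ \tne_{\tree,\tree'}\,.
\]
(The cases $S\in\{\varnothing,\leaves\}$ are trivial, with $\min\{\mu(S),\mu(\leaves\setminus S)\}=0$.)

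The first two quantities come from the plain ``subtree covering''. The vertices $v\in\vertices$ that are maximal in the descendant order of $\tree$ with $\descendants v\cap S=\varnothing$ are pairwise incomparable, so their descendant sets are pairwise disjoint; each lies in $\leaves\setminus S$, and every $\ell\notin S$ lies below one of them (the singleton $\{\ell\}$ is itself a candidate). Hence $\mu(\leaves\setminus S)\le\maxima(\vertices\setminus\tree_{\ge S})=q_1(S)$, and symmetrically $\mu(S)\le\maxima(\vertices\setminus\tree_{\ge\leaves\setminus S})=q_2(S)$. For $q_3$ we bring in the genuinely new device, an \emph{anti-descendant} doad set. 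Assuming $S$ nonempty and proper, set $w=\lca(\leaves\setminus S)$; as $w$ is a common ancestor of all leaves outside $S$ we get $\leaves\setminus S\subseteq\descendants w$, hence $\antidescendants w\subseteq S$. The single doad set $\antidescendants w$ accounts for this ``outside'' part of $S$, while the remainder $S\setminus\antidescendants w=S\cap\descendants w$ is covered disjointly by the descendant sets of the $\tree$-maximal vertices $v<w$ with $\descendants v\subseteq S$, i.e.\ by the maximal subtrees of $\tree$ that sit strictly below $w$ and stay inside $S$. The anti-descendant set together with these subtrees is a disjoint cover of $S$ whose size is $q_3(S)$; the quantity $q_4(S)$ is the mirror statement obtained by exchanging $S$ and $\leaves\setminus S$, and bounds $\mu(\leaves\setminus S)$.

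The routine part is checking, in each construction, that the sets used are genuinely doad sets of $\tree$, are pairwise disjoint, and have exactly the intended union (disjointness may also be quoted from Lemma~\ref{lem: disjointdoads}). The main obstacle I anticipate is the order-theoretic bookkeeping for $q_3$ and $q_4$: one must verify that the number of maximal subtrees of $\tree$ lying strictly below $\lca(\leaves\setminus S)$ and contained in $S$ is exactly $\maxima\bigl(\tree_{\le\lca(\leaves\setminus S)}\cap\tree_{\ge S}\bigr)$, the count recorded in the theorem. I would establish this by singling out the ``spine'' $\tree_{\le w}\cap\tree_{\ge(\leaves\setminus S)}$ of common ancestors of $\leaves\setminus S$ inside the subtree rooted at $w$, noting that removing it from $\tree_{\le w}$ leaves a forest whose connected components are precisely those maximal subtrees, and reading off that their number equals the asserted number of maximal elements. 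With all four bounds in hand, taking the minimum and then the maximum over doad sets $S$ of $\tree'$ gives $\tne_{\tree,\tree'}\ge\max_{v'}\min\{\mu(\descendants{v'}),\mu(\antidescendants{v'})\}$, which is exactly the hypothesis needed to invoke Lemma~\ref{lemma:asymptotic-containment}. A comparison with Proposition~\ref{prop:trivial-exponent} shows that it is precisely the anti-descendant trick in $q_3,q_4$ — not the plain covers $q_1,q_2$ — that produces the improvement over the trivial exponent $\lfloor n/2\rfloor$.
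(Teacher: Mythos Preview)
Your approach matches the paper's: invoke Lemma~\ref{lemma:asymptotic-containment} after exhibiting, for each doad set $S$ of $\tree'$, a cover of $S$ or of $\leaves\setminus S$ by at most $\tne_{\tree,\tree'}$ doad sets of $\tree$, with four competing constructions corresponding to the four quantities inside the minimum. Your constructions for $q_1$, $q_2$ and your anti-descendant-plus-subtrees cover for $q_3$ are exactly those sketched in the paper.

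There is, however, a genuine gap in the bookkeeping step you flag for $q_3$ and $q_4$. The equality you plan to verify---that the number of maximal subtrees of $\tree$ lying below $w=\lca(\leaves\setminus S)$ and contained in $S$ equals $\maxima(\tree_{\le w}\cap\tree_{\ge S})$---is false in general. Take $\tree=\hierarchicaltree_3$ on leaves $\{1,\dots,8\}$ and $S=\{1,3,5,7\}$. Then $w=\rootvertex$; since $\rootvertex\in\tree_{\ge S}$ (it has descendants in $S$), the poset $\tree_{\le w}\cap\tree_{\ge S}$ has the unique maximal element $\rootvertex$, giving $\maxima(\cdot)=1$ and $q_3=2$. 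But the maximal subtrees below $\rootvertex$ contained in $S$ are the four singletons $\{1\},\{3\},\{5\},\{7\}$; your spine argument correctly counts four, not one. More to the point, no two doad sets of $\hierarchicaltree_3$ have union $\{1,3,5,7\}$ (every non-singleton doad set of $\hierarchicaltree_3$ meets $\{2,4,6,8\}$), so the covering hypothesis of Lemma~\ref{lemma:asymptotic-containment} cannot be met with $c_0=2$ at this $S$. What your cover actually establishes is $\mu(S)\le\maxima\bigl(\tree_{\le w}\setminus\tree_{\ge(\leaves\setminus S)}\bigr)+1$, which is precisely the quantity your spine argument counts; the mismatch with the printed $q_3$ reflects an issue with the formula as stated (the paper's own proof uses the same construction as you and suffers from the same discrepancy) rather than with your strategy.
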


\begin{proof}
We claim that we can cover any non-trivial $S \in \doad(\tree')$ or
its complement $\leaves \setminus S$ with at most
$\tne_{\tree,\tree'}$ doad sets of $\tree$.
Then the assertion follows from Lemma~\ref{lemma:asymptotic-containment}.
Each of the four numbers in the minimum
is the number of doad sets involved in one such covering.

\begin{itemize}
\item $\boldsymbol{ \maxima(\vertices \setminus \tree_{\ge S}):}$
  We cover $\leaves \setminus S$.
  The elements $v$ of $\vertices \setminus \tree_{\ge S}$ are
  precisely those for which $\descendants{v} \subseteq \leaves
  \setminus S$. Then, on the path from any $\ell \in \leaves \setminus
  S$ to $\rootvertex$, we must hit a maximal such element, so that the
  union of these $\descendants{v}$ equals $\leaves \setminus S$.
\item $\boldsymbol{ \maxima( \tree_{\le \lca(\leaves \setminus S)}
    \cap \tree_{\ge S} ) +1:}$
  We cover $S$.
  If $w = \lca(\leaves \setminus S)$, then $\antidescendants{w}
  \subset S$, and the $\descendants{v}$ for $v \in \tree_{\le w} \cap
  \tree_{\ge S}$ cover the rest of $S$. Again, it is enough to take
  $v$ maximal in this subposet.
\item The other two coverings are obtained by taking complements.
\end{itemize}
\end{proof}

Observe that for small $n$, one can record the above minimum for every
non-trivial $S \subseteq \leaves$, and then compare the doad sets of
various $\tree'$ against this data.

\section{Comparison of Plane Tree Networks}
In this section, we focus on plane trees. As mentioned earlier, their
leaves are canonically ordered from left to right, and this yields a
natural way to identify their leaf sets (provided they have the same
number of leaves). In this setting, we can say more about tensor
network exponents than for general trees with general leaf
bijections. One of the big advantages of plane trees is that doad sets
are intervals (or their complements) in the left-to-right ordering of
the leaves.

In a plane tree $\tree$, every vertex $v$ can be identified with a
0/1-sequence, representing the left (0) and right (1) steps on the
path from $\rootvertex$ to $v$ as in Figure~\ref{fig:ex-label}. The
ordering of the leaves is then lexicographic.

\begin{figure}[H]
\centering
  \begin{tikzpicture}[scale=0.5, level distance=2cm,
  level 1/.style={sibling distance=8cm},
  level 2/.style={sibling distance=4cm},
  level 3/.style={sibling distance=2cm}]
  \node {root}
    child {node {$0$}
      child {node {$00$}
      }
      child {node {$01$}
        child {node {$010$}}
        child {node {$011$}}
      }
    }
    child {node {$1$}
    child {node {$10$}
      }
      child {node {$11$}
      child {node {$110$}}
        child {node {$111$}}
      }
    };
\end{tikzpicture}
  \caption{Labelled binary tree $\tree$.}
  \label{fig:ex-label}
\end{figure}
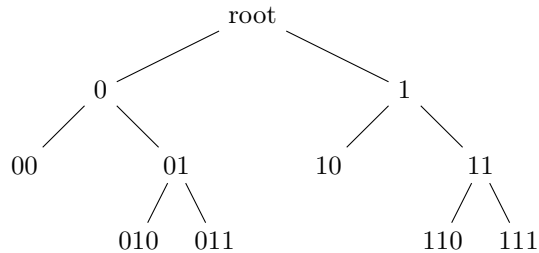

Our strategy to bound tensor network exponents is to compare all plane
trees to the plane train track tree $\tttree_n$
(cf.~\S\ref{sec:trees}).
Then we can use that tensor network exponents behave
transitively in the following sense.

\begin{remark} \label{rk:transitive}
Given trees $\tree, \tree', \tree''$, a tensor
network exponent $\tne_{\tree,\tree'}$ for $\tree$ in $\tree'$, and a
tensor network exponent $\tne_{\tree',\tree''}$ for $\tree'$ in
$\tree''$, the number $\tne_{\tree,\tree''} \coloneq
\tne_{\tree,\tree'} \cdot \tne_{\tree',\tree''}$ is a tensor network
exponent for $\tree$ in $\tree''$.
\end{remark}

\subsection{Containment in TT Networks}
First, we try to include $\tns(\network)$ in
$\tns(\ttnetwork_n)$. Here, we know the doad sets we need to cover.
\begin{remark} \label{rk:tt-doad-sets}
  Identifying the ordered leaf set of the plane train track tree
  $\tttree_n$ with $\{1,\ldots,n\}$, the non-empty doad sets of
  $\tttree_n$ are precisely the sets of the form $\{1,\ldots,\ell\}$
  or $\{\ell,\ldots,n\}$ for $1 \le \ell \le n$.
\end{remark}

To formulate our bound, we need the notion of height of a vertex.

\begin{definition}
  The height $h_v$ of a vertex $v$ of a plane tree $\tree$ is the
  number of 1s occurring before the final 0 in its
  0/1-encoding.\footnote{If the label ends on 0, this is just the
    total number of 1s. If it ends on a couple of consecutive 1s, they
    do not contribute to the height.}
  The dual height $h^*_v$ is the number of 0s occurring before the
  final 1.
\end{definition}

For example, the leaf labeled 01 in Figure~\ref{fig:ex-label} has
height 0 while the leaf 110 has height 2. The sequence of heights for
the leaves of tree in Figure~\ref{fig:ex-label} is 0,1,0,1,2,0.

\begin{theorem} \label{thm:exponent-via-heights}
  Let $\tree$ be a plane tree on $\leaves = \{ 1, \ldots, n \}$. Then
  for $\ell=1, \ldots, n$, the set $\{1,\ldots,\ell\} \subseteq
  \leaves$ can be covered by at most $1 + h_\ell$ descendant sets,
  i.e., sets of the form $\descendants{v}$.
\end{theorem}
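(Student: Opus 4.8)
The plan is to induct on $\ell$, tracking which leaf $\ell$ we are about to incorporate and producing a covering of $\{1,\ldots,\ell\}$ by at most $1+h_\ell$ descendant sets. First I would set up the base case: for $\ell=1$, the leaf $1$ is the left-most leaf, whose $0/1$-label is a string of all $0$s, so $h_1 = 0$, and $\{1\}=\descendants{1}$ is a single descendant set, i.e.\ $1 = 1 + h_1$. This is the anchor.

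For the inductive step, suppose $\{1,\ldots,\ell-1\}$ is covered by at most $1+h_{\ell-1}$ descendant sets; I want to cover $\{1,\ldots,\ell\}$ with at most $1+h_\ell$. The key structural observation is how the $0/1$-labels of consecutive leaves $\ell-1$ and $\ell$ are related in the lexicographic order: the label of $\ell$ is obtained from the label of $\ell-1$ by taking the longest prefix of $\ell-1$'s label that ends in a $0$, flipping that final $0$ to a $1$, and then appending a string of $0$s down to the leaf. Equivalently, let $w$ be the vertex just \emph{above} that flipped bit — concretely, $w$ is the lowest ancestor of leaf $\ell-1$ whose left subtree contains $\ell-1$; then leaf $\ell$ is the left-most leaf of the right child $w_1$ of $w$, and $\descendants{w} = \descendants{w_0}\cup\descendants{w_1}$ where $\descendants{w_0}$ consists of leaves $\le \ell-1$ and $\descendants{w_1}$ consists of leaves $\ge \ell$. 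The two cases to distinguish are: (i) $\descendants{w_0} = \{\,j,\ldots,\ell-1\,\}$ happens to be exactly one of the descendant sets used in the covering of $\{1,\ldots,\ell-1\}$ — but more robustly, the clean move is to observe that once we add leaf $\ell$ we can often \emph{merge} sets. The slick version: cover $\{1,\ldots,\ell-1\}$ efficiently by the ``staircase'' of descendant sets $\descendants{u_0}, \descendants{u_1}, \ldots$ read off from the path from $\rootvertex$ to leaf $\ell-1$, namely at each node on that path where the path goes \emph{right}, take the descendant set of the left child; these are pairwise disjoint, their union is $\{1,\ldots,\ell-1\}$, and their number is exactly the number of right-steps on the path to $\ell-1$, which one checks equals $1+h_{\ell-1}$ after accounting for the trailing $1$s versus the height convention. (Here is where the footnote in the definition of height matters: the height counts $1$s before the \emph{final} $0$, so trailing $1$s on the label of $\ell-1$ correspond to $\descendants{\cdot}$ pieces that are singletons at the very right end, and these get absorbed.)

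Now for leaf $\ell$ itself: write the path from $\rootvertex$ to $\ell$; it goes right exactly $h_\ell$ times before its final segment of $0$s (plus possibly $0$-steps interleaved, which contribute nothing). The descendant sets $\descendants{v}$ for $v$ the left child at each right-step, \emph{together with} $\descendants{\ell}=\{\ell\}$ itself, form a disjoint cover of $\{1,\ldots,\ell\}$: indeed every leaf $\le \ell$ either equals $\ell$ or branches off to the left of the path to $\ell$ at some right-step, hence lies in the left-child descendant set at that step. Counting: $h_\ell$ left-child pieces from the right-steps, plus the singleton $\{\ell\}$, gives $h_\ell + 1$ descendant sets, as claimed. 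I would present the argument in this second, direct form rather than as an induction, since it avoids bookkeeping about merging: one simply reads the covering off the path to $\ell$.

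The main obstacle — really the only subtle point — is pinning down the precise correspondence between ``number of right-turns on the root-to-$\ell$ path that precede the final $0$'' and the height $h_\ell$ as defined (number of $1$s before the final $0$ in the label), and making sure the trailing-$1$s corner case is handled so the count is $h_\ell+1$ and not $h_\ell+2$. Concretely, if the label of $\ell$ ends in $0$ (it always does unless $\ell=n$, since only the right-most leaf has an all-path-with-trailing-$1$s shape), then ``$1$s in the label'' equals ``right-steps on the path,'' and each right-step's left sibling contributes one descendant set disjoint from the rest; adding the singleton $\{\ell\}$ gives exactly $1+h_\ell$. If $\ell = n$ one checks separately that $\{1,\ldots,n\}=\leaves=\descendants{\rootvertex}$ is a single descendant set and $1 + h_n \ge 1$, so the bound holds trivially. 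Writing this cleanly requires care with the $0/1$-encoding conventions but involves no real difficulty.
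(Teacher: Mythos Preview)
Your direct construction---reading a covering off the root-to-$\ell$ path by taking the left-child subtree at every right-turn and then adjoining $\{\ell\}$---is a genuinely different route from the paper's proof, which instead inducts on the number $n$ of leaves and performs a case analysis on the last one or two digits of the label of $\ell$. Your approach is more explicit and, once corrected, arguably cleaner.

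There is, however, a real gap. You assert that ``the label of $\ell$ ends in $0$ \ldots\ unless $\ell=n$,'' and this is false: in the tree of Figure~\ref{fig:ex-label}, the leaf labeled $011$ is the third of six leaves, yet its label ends in $1$. For such a leaf your construction takes the left sibling at \emph{every} $1$ in the label (here two of them) together with the singleton $\{\ell\}$, producing three descendant sets; but $h_{011}=0$, so the theorem demands a covering by a single set. Your covering is correct as a covering but too wasteful to meet the bound. The fix is simple: instead of finishing with the singleton $\{\ell\}$, finish with $\descendants{u}$, where $u$ is the vertex whose label is that of $\ell$ truncated immediately after its final $0$ (and $u=\rootvertex$ if the label has no $0$, i.e., $\ell=n$). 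Then $\ell$ is the right-most leaf below $u$, so $\descendants{u}$ is an interval of leaves ending at $\ell$ and absorbs all the left-sibling pieces coming from the trailing $1$s; what remain are exactly the $h_\ell$ left-sibling pieces attached to the $1$s preceding the final $0$, together with $\descendants{u}$ itself, for a total of $h_\ell+1$ sets. With this correction your direct argument goes through.
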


\begin{proof}
  We proceed by induction on $n$. For $n=2$, $\tree$ is the tree from
  Figure~\ref{fig:matrixrank}, and every non-empty leaf set is a
  descendants set.

  So suppose $n \ge 3$ and consider leaf $\ell$. We abbreviate $S
  \coloneq \{1,\ldots,\ell\}$.
  If $\ell=1$ or $\ell=n$, the set $S$ itself is a descendant
  set. Otherwise, the 0/1 encoding of our leaf has at least two
  digits.
  We distinguish three cases, based on the last two such digits.

  \begin{itemize}
  \item $\boldsymbol{1:}$ If the last digit is a 1, and $v$ is the
    parent of leaf $\ell$, we have $h_\ell=h_v$.
    The tree $\tree' \coloneq \tree \setminus \tree_{<v}$ obtained by
    removing all proper descendants of $v$ will have strictly fewer
    than $n$ leaves. By induction, we can cover all leaves of $\tree'$
    weakly to the left of leaf $v$ by at most $1+ h_v$ descendant sets
    of $\tree'$.

    If in this covering, $v$ is covered by $\descendants[\tree']{w}$,
    we can take the same covering but replace
    $\descendants[\tree']{w}$ by $\descendants[\tree]{w}$ to obtain
    a covering of $S$.

  \item $\boldsymbol{00:}$ If the last two digits are 00, we are in a
    situation as in Figure~\ref{fig:prooflabel00} where we call $v$
    the parent of leaf $\ell$, $u$ the parent of $v$, $u'$ the right child
    of $u$, and $v'$ the right child of $v$.

    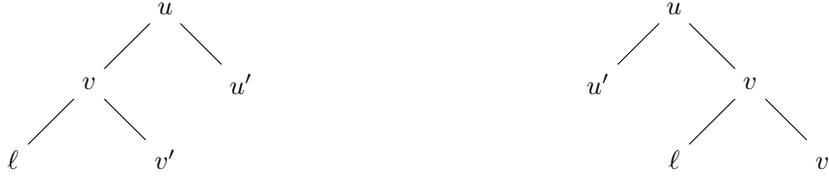
\begin{figure*}[t!]
    \centering
    \begin{subfigure}[t]{0.5\textwidth}
        \centering
      \begin{tikzpicture}[scale=0.5, level distance=2cm,
        level 1/.style={sibling distance=4cm},
        level 2/.style={sibling distance=4cm},
        level 3/.style={sibling distance=2cm}]
        \node {$u$}
        child {node {$v$}
          child {node {$\ell$}}
          child {node {$v'$}}
        }
        child {node {$u'$}
        };
      \end{tikzpicture}
    \caption{Structure near leaf $\ell$ when its labelling ends in $00$.}
      \label{fig:prooflabel00}
    \end{subfigure}%
    ~ 
    \begin{subfigure}[t]{0.5\textwidth}
        \centering
      \begin{tikzpicture}[scale=0.5, level distance=2cm,
        level 1/.style={sibling distance=4cm},
        level 2/.style={sibling distance=4cm},
        level 3/.style={sibling distance=2cm}]
        \node {$u$}
        child {node {$u'$}
        }
        child {node {$v$}
          child {node {$\ell$}
          }
          child {node {$v'$}
          }
        };
      \end{tikzpicture}
      \caption{Structure near leaf $\ell$ when its labelling ends in $10$.}
      \label{fig:prooflabel10}
      \end{subfigure}
      \caption{Two cases for the leaf $\ell$.}
\end{figure*}
    
    We have $h_\ell=h_v=h_u$.
    The tree $\tree' \coloneq \tree \setminus \tree_{<v}$ obtained by
    removing all proper descendants of $v$ will have strictly fewer
    than $n$ leaves. By induction, we can cover all leaves of $\tree'$
    weakly to the left of leaf $v$ by at most $1+ h_v$ descendant sets
    of $\tree'$. Now the only descendant set containing $v$ but not
    containing $u'$ is $\descendants[\tree']{v} = \{v\}$. If we
    replace $\descendants[\tree']{v}$ by $\descendants[\tree]{\ell} =
    \{\ell\}$, we obtain a covering of $S$.
  \item $\boldsymbol{10:}$  If the last two digits are 10, we are in a
    situation as in Figure~\ref{fig:prooflabel10} where we call $v$
    the parent of leaf $\ell$, $u$ the parent of $v$, $u'$ the left child
    of $u$, and $v'$ the right child of $v$.
    We have $h_\ell=h_{u'}+1$. The label of leaf $\ell-1$ is the label of
    $u'$ possibly with a sequence of 1s appended. Hence,
    $h_{\ell-1}=h_{u'}$.
    The tree $\tree' \coloneq \tree \setminus \tree_{<v}$ obtained by
    removing all proper descendants of $v$ will have strictly fewer
    than $n$ leaves. By induction, we can cover all leaves of $\tree'$
    weakly to the left of leaf $\ell-1$ by at most $1+ h_{\ell-1}$
    descendant sets of $\tree'$.
    Add to this covering the set $\descendants[\tree]{\ell}$ 
    to obtain a covering of $S$ by $2+h_{\ell-1}=1+h_\ell$ descendant sets.
  \end{itemize}
\end{proof}

The previous theorem does not take advantage of the flexibility we
have if we want to apply Lemma~\ref{lemma:asymptotic-containment} in
our situation. For one, we can cover either $\{1,\ldots,\ell\}$ or
$\{\ell+1,\ldots,n\}$. For another, we can use general doad sets in the
covering. The first asymmetry is easily addressed.

\begin{corollary} \label{cor:exponent-via-heights-symmetric}
  Let $\tree$ be a plane tree on $\leaves = \{ 1, \ldots, n \}$. Then
  \begin{equation} \label{eq:exponent-via-heights-symmetric}
    1 + \max \left\{ \min \{h_\ell,h^*_{\ell+1}\} \st \ell \in \leaves
      \setminus \{n\}
      \right\}
  \end{equation}
  is a tensor network exponent for $\tree$ in $\tttree_n$.
\end{corollary}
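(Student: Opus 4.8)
The plan is to deduce this from Theorem~\ref{thm:exponent-via-heights} together with the flexibility built into Lemma~\ref{lemma:asymptotic-containment}, which allows us to cover, for each doad set of $\tttree_n$, \emph{either} the set itself \emph{or} its complement. By Remark~\ref{rk:tt-doad-sets}, the non-empty doad sets of $\tttree_n$ are exactly the prefixes $\{1,\ldots,\ell\}$ and the suffixes $\{\ell,\ldots,n\}$. A suffix $\{\ell,\ldots,n\}$ is the complement of the prefix $\{1,\ldots,\ell-1\}$, so it suffices to show that for each $\ell \in \leaves \setminus \{n\}$, either $\{1,\ldots,\ell\}$ or its complement $\{\ell+1,\ldots,n\}$ can be covered by at most $1 + \min\{h_\ell, h^*_{\ell+1}\}$ doad sets of $\tree$; the claimed exponent is then the maximum over $\ell$ of these bounds, as demanded by Lemma~\ref{lemma:asymptotic-containment}.

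First I would handle the prefix $\{1,\ldots,\ell\}$: Theorem~\ref{thm:exponent-via-heights} gives a covering by at most $1 + h_\ell$ descendant sets, which are in particular doad sets. Next I would handle the suffix $\{\ell+1,\ldots,n\}$. Here I would appeal to the left–right mirror symmetry of the situation: reflecting the plane tree $\tree$ swaps the roles of 0 and 1 in every vertex label, hence swaps height with dual height, and sends the leaf labelled $\ell+1$ to the leaf labelled $n-\ell$ counted from the left in the mirrored tree. Applying Theorem~\ref{thm:exponent-via-heights} to the mirrored tree and the prefix of its first $n-\ell$ leaves yields a covering of $\{\ell+1,\ldots,n\}$ by at most $1 + h^*_{\ell+1}$ descendant sets of $\tree$. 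Taking whichever of the two coverings is smaller gives $1 + \min\{h_\ell, h^*_{\ell+1}\}$, and maximizing over $\ell$ finishes the proof.

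I expect the only real subtlety to be the bookkeeping in the mirror-symmetry step: one must check that reflecting a plane tree is an operation under which Theorem~\ref{thm:exponent-via-heights} is manifestly invariant — that the definition of height becomes the definition of dual height after swapping $0 \leftrightarrow 1$, and that ``leaves weakly to the left of $\ell+1$'' in $\tree$ corresponds exactly to ``leaves weakly to the left of position $n-\ell$'' in the mirror. Neither point is deep, but it is worth stating explicitly as a remark on duality before invoking it, so that the corollary's proof can be a single clean paragraph. Alternatively, one could avoid the mirror argument entirely by re-running the induction of Theorem~\ref{thm:exponent-via-heights} verbatim with ``left'' and ``right'' interchanged; I would prefer the symmetry formulation as it is shorter and makes the structure transparent.
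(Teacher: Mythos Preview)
Your proposal is correct and takes essentially the same approach as the paper: the paper's proof is the single sentence ``We can flip $\tree$, exchanging left and right children everywhere to the dual tree $\tree^*$,'' which is precisely the mirror-symmetry argument you spell out. Your version is considerably more explicit about the bookkeeping (identifying the doad sets of $\tttree_n$ as prefixes/suffixes, invoking Lemma~\ref{lemma:asymptotic-containment}, and tracking how heights become dual heights under the flip), but the underlying idea is identical.
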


This already yields pretty decent bounds in examples.

\begin{proof}
  We can flip $\tree$, exchanging left and right children everywhere
  to the dual tree $\tree^*$.
\end{proof}

In particular, we can recover Proposition~\ref{prop:HackbuschInverse1}
using Corollary~\ref{cor:exponent-via-heights-symmetric}.

\begin{proof}[Proof of Proposition~\ref{prop:HackbuschInverse1}]
  In $\hierarchicaltree_k$, the binary label of leaf $\ell \in \{0,
  \ldots, 2^k-1\}$ is just the encoding of the number $\ell$ as a
  $k$-digit binary number. If $h_\ell > 0$ it will have the form
  $\omega 0 1 \ldots 1$ for some 0/1 string $\omega$ followed by the
  final 0 followed by a non-negative number of 1s. Then the label
  of leaf $\ell+1$ is $\omega 1 0 \ldots 0$. So $h_\ell$ equals the
  number of 1s in $\omega$, and $h^*_{\ell+1}$ equals the number 0s in
  $\omega$. We get $h_\ell + h^*_{\ell+1} \le k-1$ so that $\min
  \{h_\ell, h^*_{\ell+1}\} \le \frac{k-1}{2}$.
\end{proof}

\subsection{Containment of TT Networks}
It is actually easy to embed TT network varieties into varieties of
other networks. The following result generalizes
Proposition~\ref{prop:HackbuschInverse1}.

\begin{theorem} \label{thm:TT-exponent-2}
  Let $\tree$ be a plane tree on $\leaves = \{ 1, \ldots, n \}$.
  Then $2$ is a tensor network exponent for $\tttree_n$ in $\tree$.
\end{theorem}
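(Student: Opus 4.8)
The plan is to apply Lemma~\ref{lemma:asymptotic-containment} with the roles of $\tree$ and $\tree'$ swapped relative to the previous subsection: now $\tree' = \tttree_n$ plays the role of the \emph{source} network $\network$, and the arbitrary plane tree $\tree$ plays the role of the \emph{target} $\network'$. So I need to show that for every vertex $v$ of $\tree$, either $\descendants[\tree]{v}$ or its complement is a union of at most $2$ doad sets of $\tttree_n$. By Remark~\ref{rk:tt-doad-sets}, the doad sets of $\tttree_n$ are exactly the prefixes $\{1,\ldots,j\}$ and the suffixes $\{j,\ldots,n\}$ in the left-to-right ordering. Since we are working with plane trees and the canonical leaf identification, every $\descendants[\tree]{v}$ is an \emph{interval} $\{a,\ldots,b\} \subseteq \{1,\ldots,n\}$ (this is the key structural fact about plane trees noted at the start of the section).

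The core observation is then elementary: an interval $\{a,\ldots,b\}$ with $1 \le a \le b \le n$ is always the intersection $\{1,\ldots,b\} \cap \{a,\ldots,n\}$ of a prefix and a suffix — but Lemma~\ref{lemma:asymptotic-containment} wants \emph{unions}, not intersections. So I instead pass to the complement: $\leaves \setminus \{a,\ldots,b\} = \{1,\ldots,a-1\} \cup \{b+1,\ldots,n\}$, which is a union of a prefix doad set and a suffix doad set of $\tttree_n$ — at most two of them, and exactly two in the generic case $1 < a$ and $b < n$. When $a = 1$ the complement is the single suffix $\{b+1,\ldots,n\}$ (one doad set), and symmetrically when $b = n$ the complement is the single prefix; when $a=1$ and $b=n$ the vertex is the root and the complement is empty. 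In every case the complement $\antidescendants[\tree]{v}$ is covered by at most two doad sets of $\tttree_n$, which is precisely the hypothesis of Lemma~\ref{lemma:asymptotic-containment} with $c_0 = 2$. Taking any $c > 2$ — in particular we may take $c = 2$ in the non-asymptotic statement, but for the stated asymptotic exponent $c_0 = 2$ \emph{is} itself a tensor network exponent by the remark following Lemma~\ref{lemma:asymptotic-containment} — gives $\tns(r\,\ttnetwork_n) \subseteq \tns(r^2\,\network)$ for all $r \gg 0$, so $2$ is a tensor network exponent for $\tttree_n$ in $\tree$.

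There is essentially no obstacle here; the only thing to be careful about is the bookkeeping of degenerate cases (empty complement at the root, one-element coverings when the interval touches an end of $\leaves$), and the fact that the definition of tensor network exponent requires the bound to hold for \emph{all} $f, f'$ and all $\ambientspaces$ — but this is automatic since Lemma~\ref{lemma:asymptotic-containment} already delivers exactly that uniformity. I would write the proof in three or four sentences: recall that $\descendants[\tree]{v}$ is an interval, complement it to get a union of $\le 2$ TT-doad sets, and invoke Lemma~\ref{lemma:asymptotic-containment}. I might also add a remark that this recovers Proposition~\ref{prop:HackbuschInverse1} by specializing $\tree = \hierarchicaltree_k$, which is presumably why the theorem is phrased as a generalization of it.
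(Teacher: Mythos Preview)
Your proposal is correct and follows essentially the same argument as the paper's proof: both observe that descendant sets of a plane tree are intervals in $\{1,\ldots,n\}$, note that an interval touching an endpoint is itself a $\tttree_n$-doad set, and otherwise pass to the complement, which splits into a prefix and a suffix (at most two $\tttree_n$-doad sets), then invoke Lemma~\ref{lemma:asymptotic-containment} with $c_0=2$. The only cosmetic difference is that the paper handles the endpoint case by covering $\descendants[\tree]{v}$ directly rather than its complement, whereas you uniformly cover $\antidescendants[\tree]{v}$ and treat the endpoint cases as degenerate instances of the same covering.
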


Using Remark~\ref{rk:transitive}, we obtain a general comparison bound
for plane trees.

\begin{corollary} \label{cor:exponent-plane-general}
  Let $\tree, \tree'$ be plane trees on accordingly ordered $\leaves =
  \{ 1, \ldots, n \}$. Then
  \begin{equation} \label{eq:exponent-via-heights-symmetric}
    2 + 2 \max \left\{ \min \{h_\ell,h^*_{\ell+1}\} \st \ell \in
      \leaves \right\}
  \end{equation}
  is a tensor network exponent for $\tree$ in $\tree'$.
\end{corollary}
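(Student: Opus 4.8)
The plan is to combine Theorem~\ref{thm:TT-exponent-2} and Corollary~\ref{cor:exponent-via-heights-symmetric} via the transitivity of tensor network exponents recorded in Remark~\ref{rk:transitive}. The key observation is that both statements involve the train track tree $\tttree_n$ as an intermediary: we can route a containment of $\tns(\network)$ into $\tns(\network')$ through $\tns(\ttnetwork_n)$.

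First I would apply Corollary~\ref{cor:exponent-via-heights-symmetric} to the tree $\tree$: the number
\[
  a \coloneq 1 + \max \left\{ \min \{h_\ell, h^*_{\ell+1}\} \st \ell \in \leaves \setminus \{n\} \right\}
\]
is a tensor network exponent for $\tree$ in $\tttree_n$. Next, Theorem~\ref{thm:TT-exponent-2} gives that $b \coloneq 2$ is a tensor network exponent for $\tttree_n$ in $\tree'$. Both exponents are taken with respect to the canonical left-to-right identification of leaves, which is consistent across $\tree$, $\tttree_n$, and $\tree'$ since all three are plane trees on $\{1,\ldots,n\}$, so there is no mismatch in the leaf bijections. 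By Remark~\ref{rk:transitive}, the product $a \cdot b = 2a = 2 + 2 \max \left\{ \min \{h_\ell, h^*_{\ell+1}\} \st \ell \in \leaves \setminus \{n\} \right\}$ is then a tensor network exponent for $\tree$ in $\tree'$, which is exactly the claimed bound (the index set $\leaves$ versus $\leaves \setminus \{n\}$ makes no difference, since $h_n$ and $h^*_{n+1}$ are not both defined in a meaningful way, or one simply reads the maximum over the same range).

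There is no real obstacle here; the proof is a one-line citation chain. The only point requiring a sentence of care is checking that the leaf identifications compose correctly: transitivity in Remark~\ref{rk:transitive} is stated for exponents on a common leaf set, and here we must note that the canonical left-to-right identification $\leaves(\tree) \to \leaves(\tttree_n) \to \leaves(\tree')$ agrees with the direct identification $\leaves(\tree) \to \leaves(\tree')$, which is immediate since each is just "the $i$-th leaf from the left." I would write the proof as simply invoking Corollary~\ref{cor:exponent-via-heights-symmetric}, Theorem~\ref{thm:TT-exponent-2}, and Remark~\ref{rk:transitive} in that order.

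\begin{proof}
  By Corollary~\ref{cor:exponent-via-heights-symmetric}, the number
  $1 + \max \left\{ \min \{h_\ell,h^*_{\ell+1}\} \st \ell \in \leaves
  \setminus \{n\} \right\}$ is a tensor network exponent for $\tree$
  in $\tttree_n$. By Theorem~\ref{thm:TT-exponent-2}, the number $2$
  is a tensor network exponent for $\tttree_n$ in $\tree'$. Since all
  three trees are plane trees on $\{1,\ldots,n\}$ with the canonical
  left-to-right leaf identifications, these compose, and
  Remark~\ref{rk:transitive} gives that the product
  $2\bigl(1 + \max \left\{ \min \{h_\ell,h^*_{\ell+1}\} \st \ell \in
  \leaves \right\}\bigr)
  = 2 + 2 \max \left\{ \min \{h_\ell,h^*_{\ell+1}\} \st \ell \in
  \leaves \right\}$
  is a tensor network exponent for $\tree$ in $\tree'$.
\end{proof}
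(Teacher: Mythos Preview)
Your proposal is correct and follows exactly the approach the paper intends: the paper states the corollary immediately after Theorem~\ref{thm:TT-exponent-2} with the single sentence ``Using Remark~\ref{rk:transitive}, we obtain a general comparison bound for plane trees,'' and gives no further proof. Your write-up simply spells out that one-line transitivity argument (Corollary~\ref{cor:exponent-via-heights-symmetric} for $\tree$ in $\tttree_n$, then Theorem~\ref{thm:TT-exponent-2} for $\tttree_n$ in $\tree'$, multiplied via Remark~\ref{rk:transitive}), which is precisely what the paper has in mind.
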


\begin{proof}[Proof of Theorem~\ref{thm:TT-exponent-2}]
  As mentioned before, all doad sets of the plane tree $\tree$ are
  contiguous intervals in $\{ 1, \ldots, n \}$. If such an interval
  contains $1$ or $n$, it is doad for $\tttree_n$. Otherwise, its
  complement is a union of such intervals in $\doad(\tttree_n)$.
\end{proof}

\begin{remark}
The bound obtained from Corollary \ref{cor:exponent-plane-general} is not sharp in general. This is trivially true in the special case where $\tree' = \tttree_n$ for $n$ the number of leaves of $\tree$ and $\tree'$. This bound can differ greatly from the smallest containment exponent: consider, for instance, the trees presented in Figure \ref{fig:bound-not-sharp}. Using Lemma \ref{lemma:asymptotic-containment}, or the bound obtained from the poset structure, we find that the exponent for the containment of the tree $\tree$ in Figure \ref{subfig:a} in $\tree'$, the tree in Figure \ref{subfig:b}, is already $1$ while Corollary \ref{cor:exponent-plane-general} provides $4$ as the upper bound. 
\end{remark}

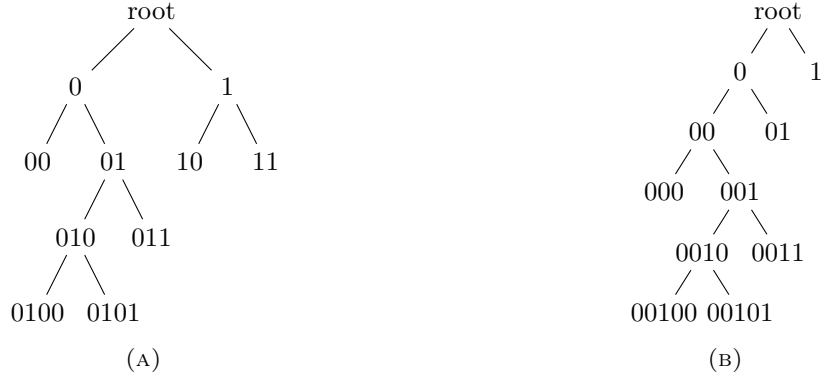
\begin{figure}[t!] 
    \centering
    \begin{subfigure}[t]{0.5\textwidth}
        \centering
\begin{tikzpicture}[scale=0.5, level distance=2cm,
  level 1/.style={sibling distance=4cm},
  level 2/.style={sibling distance=2cm},
  level 3/.style={sibling distance=2cm}]
  \node {root}
    child {node {$0$}
      child {node {$00$}
      }
      child {node {$01$}
        child {node {$010$}
        child {node {$0100$}}
        child {node {$0101$}
        }
        }
        child {node {$011$}}
      }
    }
    child {node {$1$}
    child {node {$10$}}
      child {node {$11$}
      }
    };
\end{tikzpicture}
        \caption{}
        \label{subfig:a}
    \end{subfigure}%
    ~ 
    \begin{subfigure}[t]{0.5\textwidth} 
        \centering
\begin{tikzpicture}[scale=0.5, level distance=1.6cm,
  level 1/.style={sibling distance=2cm},
  level 2/.style={sibling distance=2cm},
  level 3/.style={sibling distance=2cm}]
  \node {root}
    child {node {$0$}
      child {node {$00$}
      child {node {$000$}}
        child {node {$001$}
        child {node {$0010$}
        child {node {$00100$}}
        child {node {$00101$}}
        }
        child {node {$0011$}}
        }
      }
      child {node {$01$}
      }
    }
    child {node {$1$}
    };
\end{tikzpicture}        
        \caption{}
         \label{subfig:b}
    \end{subfigure}
    \caption{Two trees with 6 leaves.}
    \label{fig:bound-not-sharp}
\end{figure}


\section{Computational Exponent Search}
In the previous sections, we have developed the purely
combinatorial way to certify tensor network containments.
To develop an intuition about this new measure, we determine
the bound of Lemma~\ref{lemma:asymptotic-containment} for all pairs of
trees on small leaf sets.

\subsection{Trees on few leaves}


For our implementation we have worked with the poset structure of the full binary trees using the Sage package \textit{BinaryTree} whose documentation can be found in \cite{SageBin}. One can either work with the binary tree structure, or convert it to a poset. Hence, the key feature of this implementation is that the algorithm generates all full binary trees on $n$ leaves for a given $n$.\\

The number $\tau_n$ of such trees on $n \ge 2$ leaves 
$$ 1, 1, 2, 3, 6, 11, 23, 46, 98, 207, 451, 983, \ldots $$
is sequence~\cite[A001190]{OEIS}.
In order to cover all possible tensor network comparisons
$(\tree,\tree')$, it is enough to permute the leaf set of $\tree'$.
We thus get $\tau_n \times \tau_n \times n!$ instances of the following
problem ($\doad(\tree, \tree', \pi)$) for trees $\tree$, $\tree'$ on
$\leaves = \{1, \dots, n\}$ and a permutation $\pi \colon \leaves \to
\leaves$.
\begin{quote}
  Find for every subset $S \subset \leaves$ the minimal number $n_S$
  of doad sets of $\tree$ whose union is $S$. Then record $\max \{ n_S
  \st \pi(S) \in \doad(\tree') \}$.
\end{quote}

For instance, for $n=4$, we obtain the following $2 \times 2 \times
24$ tensor of tensor network exponents which can be described by the matrix
\[
\begin{bmatrix}
    1 & 1 \\
    1 & 1 \\
\end{bmatrix}
\]
once per each of the 24 permutations.

The results for $n=5, 6, 7, 8$ are collected in \cite{sofiaGitHub}.\\



We end this section with a remark.

\begin{remark} 
The bound from Theorem~\ref{thm:exponents-from-poset} is not necessarily sharp. Consider the binary tree $\mathcal{T}$ as in Figure \ref{fig:example8leaves} which we compare to the binary tree $\tttree_8$.

\begin{figure}[H]
\centering
  \begin{tikzpicture}[scale=0.5, level distance=2cm,
  level 1/.style={sibling distance=10cm},
  level 2/.style={sibling distance=4cm},
  level 3/.style={sibling distance=3cm},
  level 4/.style={sibling distance=3cm},
  level 5/.style={sibling distance=3cm},
  level 6/.style={sibling distance=2cm},]
  \node {Root}
    child {node {$0$}
    child {node {$00$}
      }
      child {node {$01$}
      child {node {$010$}
      }
      child {node {$011$}
      child {node {$0110$}
      child {node {$01100$}
      }
      child {node {$01101$}
      }
      }
      child {node {$0111$}
      }
      }
      }
    }
    child {node {$1$}
    child {node {$10$}
      child {node {$100$}
      }
      child {node {$101$}
      }
      }
      child {node {$11$}
      }
    };
\end{tikzpicture}
  \caption{A binary tree on $8$ leaves.}
  \label{fig:example8leaves}
\end{figure}
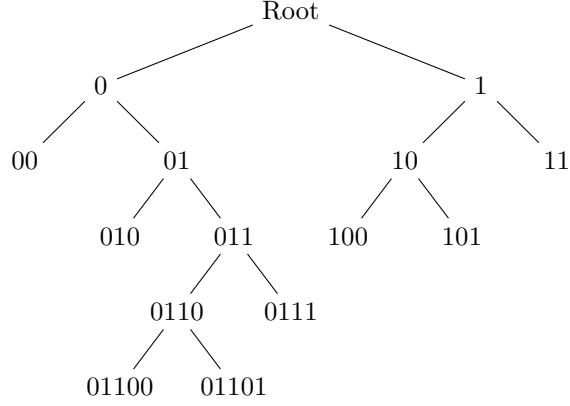

Here, using Theorem~\ref{thm:exponents-from-poset}, the leaf $l = 01101$ will give us the bound $\tne_{\tree,\tree'} = 3$. However, a generic tensor $t$ in $ \tns(\network)$ satisfies
\[
\dim \biggl( \Big(
  \bigotimes_{\ell \in \antidescendants{v}} V_\ell \Big)^*
  \contract t \biggl) \ \leq \ f_v \,,
\]
where $f_v = 2$ for all $v \in \mathcal{V}$.
\end{remark}

\subsection{Containment exponents via integer programming}
For larger $n$, it becomes infeasible to compare all doad sets of
$\tree$ and $\tree'$ against each other. We now describe an integer
linear program which computes this bound.
We introduce decision variables $\underline{x}^w_v,
\overline{x}^w_v, \underline{y}^w_v, \overline{y}^w_v,
\underline{z}^w, \overline{z}^w \in \{0,1\}$
for vertices $v$ of $\tree$ and nodes $w$ of $\tree'$ with the
interpretation:
\\[.5ex]
\begin{minipage}[t]{.6\textwidth}
\vspace{-0.2cm}
\begin{align*}
    \underline{z}^w &= 1 \text{ if we cover $\descendants[\tree']{w}$},\\
    \overline{z}^w & =1 \text{ if we cover $\antidescendants[\tree']{w}$},\\
    \underline{x}^w_v & =1 \text{ if we use $\descendants[\tree]{v}$ in the
  cover of $\descendants[\tree']{w}$},\\
  \overline{x}^w_v & =1 \text{ if we use $\descendants[\tree]{v}$ in the
  cover of $\antidescendants[\tree']{w}$},\\
  \underline{y}^w_v & =1 \text{ if we use $\antidescendants[\tree]{v}$ in the
  cover of $\descendants[\tree']{w}$},\\
  \overline{y}^w_v & =1 \text{ if we use $\antidescendants[\tree]{v}$ in the
  cover of $\antidescendants[\tree']{w}$}.\\
\end{align*}
\end{minipage}
\begin{minipage}[t]{.3\linewidth}
\vspace{0.3cm}
  \centering
  or, more compactly, 
  \smallskip
  
  \begin{tabular}{ c|c|c|c } 
    & $\descendants[\tree]{v}$ & $\antidescendants[\tree]{v}$ \\ 
    \hline
    $\descendants[\tree']{w}$ & $\underline{x}^w_v$ & $\underline{y}^w_v$ & $\underline{z}^w$ \\ 
    \hline
    $\antidescendants[\tree']{w}$ & $\overline{x}^w_v$ & $\overline{y}^w_v$ & $\overline{z}^w$
  \end{tabular}
\end{minipage}
\medskip

Some of these variables are not needed, and can be set to zero, namely,
\begin{equation*}
\underline{x}^w_v = 0 \text{ if } \descendants[\tree]{v} \nsubseteq
\descendants[\tree']{w}, \ 
\overline{x}^w_v = 0 \text{ if } \descendants[\tree]{v} \nsubseteq
\antidescendants[\tree']{w}, \ 
\underline{y}^w_v = 0 \text{ if } \antidescendants[\tree]{v} \nsubseteq
\descendants[\tree']{w}, \ 
\overline{y}^w_v = 0 \text{ if } \antidescendants[\tree]{v} \nsubseteq
\antidescendants[\tree']{w}.
\end{equation*}

We need one more objective variable $c$, bounding the number of doad
sets needed in the various covers. Our goal is then to minimize $c$,
subject to the constraints
\begin{equation} \label{eq:IP} \tag{IP($\tree,\tree'$)}
  \begin{array}{rcl@{\quad}l} 
    \underline{z}^w + \overline{z}^w
    & \ge & 1 & \text{ for all } w \in \nodes(\tree')
    \\[1ex]
    \displaystyle \sum_{\descendants[\tree]{v} \ni \ell} \underline{x}^w_v +
    \sum_{\antidescendants[\tree]{v} \ni \ell}
    \underline{y}^w_v
    & \geq & \underline{z}^w
    & \text{ for all pairs } \ell \in \leaves, w \in \nodes(\tree')
      \text{ with } \ell \in \descendants[\tree']{w}
    \\
    \displaystyle \sum_{\descendants[\tree]{v} \ni \ell} \overline{x}^w_v +
    \sum_{\antidescendants[\tree]{v} \ni \ell} \overline{y}^w_v
    & \geq & \overline{z}^w
    & \text{ for all pairs } \ell \in \leaves, w \in \nodes(\tree')
      \text{ with } \ell \in \antidescendants[\tree']{w}
    \\[.5ex]
    \displaystyle \sum_{v} ( \underline{x}^w_v + \underline{y}^w_v )
    & \leq & c
    & \text{ for all } w \in \nodes(\tree') \\
    \displaystyle \sum_{v} ( \overline{x}^w_v +  \overline{y}^w_v )
    & \leq & c
    & \text{ for all } w \in \nodes(\tree')
  \end{array}
\end{equation}

Any feasible solution to this integer program yields a tensor network
exponent.

\section{Concluding Remarks}

The collection $\ambientspaces$ of vector spaces did not really play a
role in our considerations. In fact, for fixed $\ambientspaces$,
$\tns(r\network)$ will simply fill the ambient space for high enough
$r$.

Considering finite dimensional $V_\ell$ will introduce interesting
boundary effects. But as long as $f_v \le \dim V_v$%
, these will not play a role.

The minimal tensor network exponent for $\tree$ in $\tree'$ yields a
new (non-symmetric) distance measure for trees on a fixed leaf set
$\leaves$. It will be interesting to compare it to other tree
distances in the literature.

\subsection{Open Questions}

We end this paper with some open questions.

\begin{question}
Is it true that the upper bound for tensor network exponents from
Lemma~\ref{lemma:asymptotic-containment} is always tight as it is for
$\tttree_n$ vs.\ $\hierarchicaltree_k$?  
\end{question}

Additionally, any feasible solution to the dual of the LP relaxation
of~\eqref{eq:IP} yields a lower bound on the number of doad sets
needed for a cover. 

\begin{question}
Is it possible to use such a dual solution in order to construct 
tensors which yield lower bounds to (asymptotic) tensor network exponents?
\end{question}



\bibliographystyle{amsalpha}
\bibliography{biblio}

@article{BBM15,
  title={The {Hackbusch} conjecture on tensor formats},
  author={Buczy{\'n}ska, Weronika and Buczy{\'n}ski, Jaros{\l}aw and Micha{\l}ek, Mateusz},
  journal={Journal de Math{\'e}matiques Pures et Appliqu{\'e}es},
  volume={104},
  number={4},
  pages={749--761},
  year={2015},
  publisher={Elsevier}
}

@book{Hac12,
  title={Tensor spaces and numerical tensor calculus},
  author={Hackbusch, Wolfgang},
  volume={42},
  year={2012},
  publisher={Springer}
}

@misc{SageBin,
  author =	 {Tom Boothby},
  title =	 {Binarytrees {Sage} Package Documentation, Version 10.6 Reference Manual},
  howpublished = {\url{https://doc.sagemath.org/html/en/reference/data_structures/sage/misc/binary_tree.html}},
  year = 2007,
  note =	 {Accessed: 2025-08-26}
}

@misc{sofiaGitHub,
  author       = {Garz\'on Mora, Sof\'ia},
  title        = {Tensor Network Varieties},
  howpublished = {\url{https://github.com/sofiagarzonmora/Tensor-Network-Varieties}},
  year         = {2025},
  note         = {GitHub repository; accessed 26 August 2025},
}

@Misc{OEIS,
  author =	 {OEIS Foundation Inc.},
  title =	 {The On-Line Encyclopedia of Integer Sequences},
  howpublished = {\url{https://oeis.org/}},
  year =	 2025
}

\end{document}